\subjclass[2010]{
        11B73, 
        11C08, 
        11D88, 
        11P83, 
        11S20, 
        11T22, 
        11Y40, 
        12E30, 
        12F10, 
        41A58  
    }
\def\paragraph{\@startsection{paragraph}{4}%
  \z@\z@{-\fontdimen2\font}%
  {\normalfont\bfseries}}
\newcommand{\Supp}[1]{\operatorname{Supp}\left(#1\right)}
\newcommand{\Newt}[1]{\operatorname{\mathscr{N\kern-3pt e\kern-1pt w\kern-1pt t}}\left(#1\right)}
\DeclareRobustCommand{\stirling}{\genfrac\{\}{0pt}{}}
\newcommand{\zetax}{\zeta_{2(p-1)}}
\newcommand{\calE}{\mathcal{E}}
\newcommand{\calO}{\mathcal{O}}
\newcommand{\calR}{\mathcal{R}}
\newcommand{\bbC}{\mathbb{C}}
\newcommand{\bbF}{\mathbb{F}}
\newcommand{\bbL}{\mathbb{L}}
\newcommand{\bbN}{\mathbb{N}}
\newcommand{\bbQ}{\mathbb{Q}}
\newcommand{\bbR}{\mathbb{R}}
\newcommand{\bbZ}{\mathbb{Z}}
\newcommand{\frakA}{\mathfrak{A}}
\newcommand{\frakm}{\mathfrak{m}}
\newcommand{\frakq}{\mathfrak{q}}
\newcommand{\fraks}{\mathfrak{s}}
\newcommand{\frakz}{\mathfrak{z}}
\newtheorem{theorem}{Theorem}[section]
\newtheorem*{theorem*}{Theorem}
\newtheorem{lemma}[theorem]{Lemma}
\newtheorem{remark}[theorem]{Remark}
\newtheorem{proposition}[theorem]{Proposition}
\newtheorem{definition}[theorem]{Definition}
\newtheorem{corollary}[theorem]{Corollary}
\newtheorem{example}[theorem]{Example}
\Crefname{example}{Example}{Examples}
\Crefname{lemma}{Lemma}{Lemmas}
\Crefname{definition}{Definition}{Definitions}
\Crefname{remark}{Remark}{Remarks}
\Crefname{proposition}{Proposition}{Propositions}
\renewcommand{\theequation}{\alph{equation}}
\Crefname{equation}{}{}
\numberwithin{equation}{section}
\numberwithin{figure}{section}
\Crefname{item}{}{}
\algnewcommand\algorithmicinput{\textbf{INPUT:}}
\algnewcommand\INPUT{\item[\algorithmicinput]}
\algnewcommand\algorithmicoutput{\textbf{OUTPUT:}}
\algnewcommand\OUTPUT{\item[\algorithmicoutput]}
\Crefname{algorithm}{Algorithm}{Algorithms}
\title{Uniformizer of the False Tate Curve Extension of $\bbQ_p$}
\author{Shanwen Wang}
\address{School of Mathematics, Renmin University of China, Beijing, China}
\email{s\_wang@ruc.edu.cn}
\author{Yijun Yuan}
\address{School of Mathematical Sciences, Fudan University, Shanghai, China}
\email{941201yuan@gmail.com}
\thanks{The first author is supported by the Fundamental Research Funds for the Central Universities,  the Research Funds of Renmin University of China \textnumero 20XNLG04 and The National Natural Science Foundation of China (Grant \textnumero 11971035).}
\begin{document}
\begin{abstract}
    Let $p\geq 3$ be a prime number. In this article, we study the canonical expansion of the primitive $p^n$-th root of unity $\zeta_{p^n}$ in $p$-adic Mal'cev-Neumann field $\bbL_p$ for $n\geq 1$. More precisely, we give the explicit formula for the first $\aleph_0$ terms of the expansion of $\zeta_{p^n}$ and as an application, we use it to construct a uniformizer of $K_{2,m}=\bbQ_p\left(\zeta_{p^2},p^{1/p^m}\right)$ with $m\geq 1$.
\end{abstract}
\maketitle

\setcounter{tocdepth}{2}
\tableofcontents

\section{Introduction}
\subsection{Motivation}
Let $p\geq 3$ be a prime number. For an integer $n\geq 1$, let $\mu_{p^n}$ be the group of $p^n$-th roots of unity and we fix a compatible system $\epsilon=(\zeta_{p^n}\in \mu_{p^n})_{n\geq 0}$ of primitive $p^n$-th root of unity (i.e., for any $l\leq n$, we have $\zeta_{p^n}^{p^l}=\zeta_{p^{n-l}}$).
For $n\geq m\geq 0$ two integers, we denote by $K_{n,m}=\bbQ_p\left(\mu_{p^n},p^{1/p^m}\right)$ the false Tate curve extension of $\bbQ_p$, which is a finite Galois extension of $\bbQ_p$ of degree $\varphi(p^n)p^m$.  Let $\Gamma$ be the Galois group of $\bbQ_p^{\mathrm{cycl}}=\cup_n K_{n,0}$ over $\bbQ_p$ and let $\Gamma^{\mathrm{FT}}$ be the Galois group of $K_{\infty}=\cup_{n} K_{n,n}$ over $\bbQ_p$. Both of them are $p$-adic Lie groups.

Let $\calO_{\calE}^{+}=\mathbb{Z}_p[[T]]$, $\calO_{\calE}$ the $p$-adic completion of $\calO_{\calE}^{+}[\frac{1}{T}]$ and $\calE=\calO_{\calE}^{+}[\frac{1}{p}]$ the fraction field of $\calO_{\calE}$. The field $\calE$ is equipped with actions of $\varphi$ and $\Gamma$ given by the formulae:
\[\varphi(T)=(1+T)^p-1, \gamma(T)=(1+T)^{\chi_{\mathrm{cycl}}(\gamma)},\]
where $\chi_{\mathrm{cycl}}$ is the cyclotomic character. An \'etale $(\varphi,\Gamma)$-module over $\calE$ is a finite dimensional $\calE$-vector space $D$ endowed with semi-linear actions of $\varphi$ and $\Gamma$ commuting with each other, such that $\varphi^{*}D\cong D$. If $D$ is an \'etale $(\varphi,\Gamma)$-module over $\calE$, all the elements $x$ of $D$ can be uniquely written in the form $x=\sum_{i=0}^{p-1}\varphi(x_i)(1+T)^i$, with $x_i\in\calE$. This allows us to define a left inverse $\psi: D\rightarrow D$ of $\varphi$ by the formula $\psi(x)=x_0$; moreover, $\psi$ commutes with $\Gamma$.

The theory of $(\varphi, \Gamma)$-modules introduced by Fontaine is fruitful for the study of commutative Iwasawa theory. Among the other things, the fundamental result is the following theorem (cf. \cite[Th\'{e}or\`{e}me II.1.3]{CherbonnierColmez1999}):
\begin{theorem}[Fontaine]For any $p$-adic representation $V$, there is an isomorphism of $\bbZ_p[[\Gamma]]$-modules
    $$\mathrm{Exp}^{*}:\mathrm{H}^1_{\mathrm{Iw}}(\bbQ_p, V)\cong D(V)^{\psi=1},$$
    where $\mathrm{H}^1_{\mathrm{Iw}}(\bbQ_p, V)= \mathrm{H}^1\left(\mathrm{Gal}(\bar{\bbQ}_p/\bbQ_p ),\bbZ_p[[\Gamma]] \otimes V\right)$ and $D(V)$ is the $(\varphi,\Gamma)$-module associated to $V$ by Fontaine's equivalence of categories (cf. \cite[Th\'{e}or\`{e}me 3.4.3]{Fontaine1990}).
\end{theorem}

In 2004, J. Coates, T. Fukaya, K. Kato, R. Sujatha, and O. Venjakob \cite{CFKSV2005} proposed a program of non-commutative Iwasawa theory. In view of the important role played by the theory of $(\varphi, \Gamma)$-modules in commutative Iwasawa theory, it is natural to ask if there is an analogy of the $(\varphi, \Gamma)$-module theory in the non-commutative situation. The first interesting case can be the tower of the false Tate curve extension of $\bbQ_p$.  In \cite{Ribeiro2011}, Ribeiro introduced the notion of cohomology of $(\varphi, \Gamma^{\rm FT})$-modules. But this definition seems very difficult to describe non-commutative Iwasawa cohomology. A more direct way could be intimating the theory of field of norms of Fontaine and Wintenberger in this case and rebuild the whole theory. One surprising obstruction is that we do not even know how to write down a norm-compatible system of uniformizers of the tower $\{K_{n,m}\}_{n\geq m\geq 0}$ explicitly, which is a key input in the theory of field of norms.

Recently, there are some attempts to attack this problem. In \cite{Viviani2004}, Viviani gave a uniformizer of $K_{1,m}$:
$$ \pi_{1,m}=\frac{1-\zeta_p}{\prod_{i=1}^{m}p^{\frac{1}{p^i}}}.$$
If we denote by $v_{1,m}$ the $p$-adic valuation on $K_{1,m}$ normalized by $v_{1,m}(p)=p^m(p-1)$, then
$v_{1,m}(1-\zeta_p)=p^m$ and $v_{1,m}(p^{\frac{1}{p^m}})=p-1$ which are coprime to each other. Thus, one can use B\'ezout's lemma to construct a uniformizer in this case.
Bellemare and Lei \cite{BellemareLei2020} expand an idea of the user ``Mercio'' on the website Stackexchange and constructed a uniformizer for the field $K_{2,1}$, and they explain the reason why their method can't go further. In this article, we extend an idea of Lampert (cf. \cite{LampertMOF1}) to construct a uniformizer of $K_{2,m}$ with $m\geq 1$.

\subsection{Main results}
\paragraph*{Convention} Let $[\cdot]:\bar{\bbF}_p\rightarrow W(\bar{\bbF}_p)=\calO_{\breve{\bbQ}_p}$ be the Teichm\"{u}ller character, where $W(\bar{\bbF}_p)$ is the ring of Witt vectors over $\bar{\bbF}_p$. For any positive integer $k$ that coprimes to $p$, by abuse of notations, we will not distinguish the symbol of $k$-th primitive root $\zeta_k$ in $\bar{\bbF}_p$ and its Teichm\"{u}ller representative $[\zeta_k]$ in $\calO_{\breve{\bbQ}_p}$.

As we observed in the case $K_{1,m}$, if one can find an algebraic integer of $K_{n,m}$ with valuation coprime to $p$, then we can use B\'ezout's lemma to construct a uniformizer of $K_{n,m}$.

David Lampert in his paper \cite{Lampert1986} gave the $p$-adic expansion of $\zeta_{p^2}$ without a proof\footnote{Lampert claimed at \cite{LampertMOF1,LampertMOF2} that the expansion in his paper is incorrect.}.  The formula appearing in his paper indicates (cf. \cite{LampertMOF1}) that there is a chance to construct the desired algebraic integer. This leads us to study the canonical expansion of the primitive root of unity $\zeta_{p^n}$ in the $p$-adic Mal'cev-Neumann field $\bbL_p$, which is the spherical completion of $\bbC_p$ (cf. \Cref{subsec:8394}). On the other hand, Kedlaya\cite{Kedlaya2001} used a transfinite induction to prove the algebraic closeness of the $p$-adic Mal'cev-Neumann field $\bbL_p$. We expand Kedlaya's proof into a transfinite Newton's algorithm in  \Cref{algorithmtheory}. We will call the result of the $i$-th step of the transfinite Newton algorithm for a given polynomial $P(T)\in \bbL_p[T]$, the $i$-th approximation of a root of $P(T)$. Using this algorithm, we prove an explicit formula for the first $\aleph_0$ terms of canonical expansion of a $p^n$-th primitive root of unity in $\bbL_p$ for every $n\geq 2$ (cf. \Cref{maintheorem1} of local cite):
\begin{theorem*}
    Let $\zeta_{p^n}^{(i)}$ in $\bbL_p$ be the $i$-th approximation of $\zeta_{p^n}$ in the transfinite Newton algorithm for all $n\geq 2$. Then we have
    $$\zeta_{p^n}^{(i)}=
        \begin{cases}
            \sum_{k=0}^i \frac{(-1)^{kn}}{[k!]}\zetax^kp^{\frac{k}{p^{n-1}(p-1)}},                        & \text{ for } 0\leq i\leq p-1, \\
            \zeta_{p^n}^{(p-1)}+ \sum_{l=n}^{i-p+n}(-1)^n\zetax p^{\frac{1}{p^{n-2}(p-1)}-\frac{1}{p^l}}, & \text{ for }  i\geq p.
        \end{cases}
    $$
\end{theorem*}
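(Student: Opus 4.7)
The plan is to prove the formula by induction on $i$ using the transfinite Newton algorithm developed in \Cref{algorithmtheory}, applied to the cyclotomic polynomial $\Phi_{p^n}(X)=\Phi_p(X^{p^{n-1}})$, of which $\zeta_{p^n}$ is a root. The base case $\zeta_{p^n}^{(0)}=1$ is immediate because $v_p(\zeta_{p^n}-1)=1/(p^{n-1}(p-1))>0$, so the Teichm\"uller lift of the reduction of $\zeta_{p^n}$ modulo the maximal ideal is $1$. At each subsequent step one substitutes $X=\zeta_{p^n}^{(i-1)}+Y$ into $\Phi_{p^n}$, reads off the Newton polygon of the resulting polynomial in $Y$, and extracts both the next exponent $x_i$ and the next Teichm\"uller coefficient $[\alpha_{x_i}]$ from its lowest-slope segment.

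For \emph{Stage I} (the range $1\le i\le p-1$), I would prove by induction that the Newton polygon of $\Phi_{p^n}(\zeta_{p^n}^{(i-1)}+Y)$ still carries a leading segment of slope $-1/(p^{n-1}(p-1))$. The main computation is to expand
\[
\Phi_{p^n}\bigl(\zeta_{p^n}^{(i-1)}+Y\bigr) = \Phi_p\bigl((\zeta_{p^n}^{(i-1)}+Y)^{p^{n-1}}\bigr)
\]
and track which monomials contribute to each coefficient of $Y^j$ up to the required valuation. The partial sum appearing in the theorem has the shape of a truncated exponential $\exp\bigl((-1)^n\zetax\,p^{1/(p^{n-1}(p-1))}\bigr)$, and the algebraic equation determining the next coefficient reduces, modulo higher-valuation error, to a lift of $\alpha^{p-1}=-1$. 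The specific coefficients $(-1)^{kn}\zetax^k/[k!]$ then emerge because the truncated exponential $\sum_{k=0}^{p-1}T^k/k!$ raised to the $p^{n-1}$-th power reproduces the relation $X^{p^{n-1}}=\zeta_p$ to the required accuracy.

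For \emph{Stage II} (the range $i\ge p$), the Stage I pattern must break at $i=p$ because $v_p(p!)=1$ obstructs the continuation of the geometric slope sequence. Computing the valuation of $\Phi_{p^n}(\zeta_{p^n}^{(p-1)})$ via the truncated-exponential identity and combining it with the valuation of the linear coefficient $\Phi_{p^n}'(\zeta_{p^n}^{(p-1)})$ should yield a new Newton slope $1/(p^{n-2}(p-1))-1/p^n$, producing the first Stage II correction $(-1)^n\zetax\,p^{1/(p^{n-2}(p-1))-1/p^n}$. For $i>p$ a routine inductive step shows that each new correction is driven by a single dominant error coming from the previous $p$-th power raising, whose valuation progresses as $1/(p^{n-2}(p-1))-1/p^l$, and the sign $(-1)^n$ stays constant because raising to the $p$-th power commutes with the Teichm\"uller character.

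The main obstacle is the transition from Stage I to Stage II: one must compute $\Phi_{p^n}(\zeta_{p^n}^{(p-1)})$ modulo a sufficiently high power of $p$ to detect the cancellation of the naively expected leading contribution of valuation $p/(p^{n-1}(p-1))$. I expect this to reduce to a classical $p$-adic identity of Artin--Hasse type for the truncated exponential raised to the $p$-th power, applied with $T=(-1)^n\zetax\,p^{1/(p^{n-1}(p-1))}$; iterating this identity $n-1$ times then recovers $\zeta_p$ up to precisely the error whose valuation produces the new slope. Once this single delicate step is in hand, the induction for all $i\ge p$ follows uniformly from the structure of the transfinite Newton algorithm.
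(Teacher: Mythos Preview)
Your two–stage inductive plan via the transfinite Newton algorithm is the same architecture as the paper's proof, and your intuition that the truncated exponential $\sum_{k=0}^{p-1}T^k/k!$ governs Stage~I, together with a $p$-adic identity for its $p$-th power at the transition, is exactly right (the paper realises this through Bell polynomials and restricted Stirling numbers rather than invoking Artin--Hasse). Two points, however, are genuine gaps rather than missing details.

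First, in Stage~I your claim that the relevant segment keeps slope $1/(p^{n-1}(p-1))$ is false: the paper shows $\mathfrak{s}_{i,n}=i/(p^{n-1}(p-1))$, so the slope increases by that amount at each step. Relatedly, the residue equation is \emph{not} $\alpha^{p-1}=-1$ for $i\ge 2$; that equation appears only at $i=1$. From $i=2$ onward the residue polynomial is $-\zetax^{-1}T^{p^{n-1}}+\text{(const)}$, a pure $p^{n-1}$-th power equation with a unique root of multiplicity $p^{n-1}$. This multiplicity is structural: it reflects that $p^{n-1}$ Galois conjugates of $\zeta_{p^n}$ share the same first $\aleph_0$ terms.

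Second, and more seriously, your Stage~II plan to read off the new slope from the constant term $\Phi_{p^n}(\zeta_{p^n}^{(p-1)})$ together with the \emph{linear} coefficient $\Phi_{p^n}'(\zeta_{p^n}^{(p-1)})$ cannot work. Because the multiplicity is $p^{n-1}$, the last segment of the Newton polygon of $\Phi^{(i,n)}$ runs from abscissa $p^{n-1}(p-2)$ (the $T^{p^{n-1}}$ coefficient) to $p^{n-1}(p-1)$ (the constant term); the linear coefficient sits strictly \emph{above} this segment. Concretely, the paper computes $v_p(b^{(p,n)}_{p^{n-1}(p-1)-1})=n-\tfrac{1}{p-1}$, which for $n\ge 3$ even exceeds the valuation $2-\tfrac{1}{p}$ of the constant term, and for $n=2$ gives the slope $\tfrac{1}{p(p-1)}$ rather than the correct $\tfrac{1}{p-1}-\tfrac{1}{p^2}$. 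So you must control not $\Phi_{p^n}'$ but the coefficient of $T^{p^{n-1}}$; the paper does this by writing $b^{(i,n)}_{p^{n-1}(p-2)}$ and $b^{(i,n)}_{p^{n-1}(p-1)}$ in closed form as rational functions of $\Lambda_{i-1,n}^{p^{n-1}}-1$ and $\Lambda_{i-1,n}^{p^{n}}-1$, and then proving sharp expansions for these two quantities (the analogues of your ``Artin--Hasse type'' step) via restricted Stirling-number congruences.
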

In other words, we have
$$\zeta_{p^n}=\sum_{i=0}^{p-1}\frac{(-1)^{in}}{[i!]}\zeta_{2(p-1)}^ip^{\frac{i}{p^{n-1}(p-1)}}+\sum_{i=n}^\infty (-1)^n\zeta_{2(p-1)}p^{\frac{1}{p^{n-2}(p-1)}-\frac{1}{p^i}} +O\left(p^{\frac{1}{p^{n-2}(p-1)}}\right) .$$
In the same theorem (cf. \Cref{maintheorem1}), we discuss the relation among all the possibilities of the first $\aleph_0$ terms of $p^n$-th primitive roots:
\begin{theorem*}
    For every $\alpha\in\bbL_p$, denote by $\aleph_0(\alpha)$ the first $\aleph_0$ terms of the expansion of $\alpha$ in $\bbL_p$.
    \begin{enumerate}
        \item There exists $p-1$ distinct elements $m_0,\cdots,m_{p-2}$ in $\left(\bbZ/p^n\bbZ\right)^\times$ such that
              \begin{enumerate}
                  \item $$\aleph_0\left(\zeta_{p^n}^{m_k}\right)=\sum_{i=0}^{p-1}\frac{\left((-1)^n\zetax^{2k+1}\right)^i}{[i!]} p^{\frac{i}{p^{n-1}(p-1)}}+\sum_{j=n}^\infty (-1)^n\zetax^{2k+1} p^{\frac{1}{p^{n-2}(p-1)}-\frac{1}{p^j}};$$
                  \item $\calR_n=\{m_0,\cdots,m_{p-2}\}$ forms a $\mathrm{mod}\ p$ residue system of $\left(\bbZ/p^n\bbZ\right)^\times$.
              \end{enumerate}
        \item For every $m\in\left(\bbZ/p^n\bbZ\right)^\times$, there exists a unique $m_t\in\calR_n$ such that $\aleph_0\left(\zeta_{p^n}^m\right)=\aleph_0\left(\zeta_{p^n}^{m_t}\right)$.
    \end{enumerate}
\end{theorem*}
Besides that, we give an analogous result for $\zeta_p$ in \Cref{prop:1238}.
Finally, using \Cref{maintheorem1}, we construct a uniformizer of $K_{2,m}$ (cf. \Cref{thm:uniformizer} of local cite):
\begin{theorem*}
    \begin{enumerate}
        \item The element
              $$\pi_{2,1}= \left(p^{\frac{1}{p}}\right)^{-1}\left(\zeta_{p^2}-\sum_{k=0}^{p-1}\frac{1}{[k!]}\zetax^kp^{\frac{k}{p(p-1)}}\right)$$
              is a uniformizer of $K_{2,1}$.
        \item For $m\geq 2$, the element
              $$\pi_{2,m}= \left(p^{\frac{1}{p^m}}\right)^{-\frac{p^m-1}{p-1}}\left(\zeta_{p^2}-\sum_{k=0}^{p-1}\frac{1}{[k!]}\zetax^kp^{\frac{k}{p(p-1)}}-\sum_{l=2}^m \zetax p^{\frac{1}{p-1}-\frac{1}{p^l}}\right)$$
              is a uniformizer of $K_{2,m}$.
    \end{enumerate}
\end{theorem*}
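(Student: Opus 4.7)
The plan is to reduce the claim to a direct valuation computation using the explicit expansion of $\zeta_{p^2}$ established in the preceding main theorem. Specializing that result to $n=2$ (so that the signs $(-1)^{kn}$ and $(-1)^n$ all become trivial) yields
\begin{equation*}
    \zeta_{p^2} \;=\; \sum_{k=0}^{p-1}\frac{1}{[k!]}\zetax^{k}\, p^{\frac{k}{p(p-1)}} \;+\; \sum_{l=2}^{\infty}\zetax\, p^{\frac{1}{p-1}-\frac{1}{p^{l}}} \;+\; O\!\left(p^{\frac{1}{p-1}}\right).
\end{equation*}

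For part~(1), subtracting the first (finite) sum from $\zeta_{p^2}$ leaves precisely the infinite tail, whose smallest-valuation summand is the $l=2$ term $\zetax p^{\frac{1}{p-1}-\frac{1}{p^{2}}}$, of valuation $\tfrac{p^{2}-p+1}{p^{2}(p-1)}$; multiplying by $(p^{1/p})^{-1}$ subtracts $1/p$ and produces the valuation $\tfrac{1}{p^{2}(p-1)}$. Since the normalized valuation on $K_{2,1}$ satisfies $v_{2,1}(p)=p^{2}(p-1)$, this equals~$1$, as required. For part~(2), the additional subtraction of the terms with $l=2,\dots,m$ shifts the leading summand of the remainder to the $l=m+1$ position, of valuation $\tfrac{1}{p-1}-\tfrac{1}{p^{m+1}}$; multiplying by $(p^{1/p^{m}})^{-(p^{m}-1)/(p-1)}$ then removes $\tfrac{p^{m}-1}{p^{m}(p-1)}$ from the valuation, and a short telescoping simplification produces $\tfrac{1}{p^{m+1}(p-1)}$, which in the normalized valuation of $K_{2,m}$ again equals~$1$. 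Both valuation calculations are essentially bookkeeping once the leading term of the truncated tail has been pinpointed.

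The remaining point, which I expect to be the main obstacle, is verifying that $\pi_{2,m}$ is literally an element of $K_{2,m}$: the defining formula is naturally an identity in the ambient Mal'cev--Neumann field $\mathbb{L}_{p}$, and its individual summands, which involve $\zetax$ and various fractional $p$-power roots, do not separately belong to the totally ramified extension $K_{2,m}/\bbQ_{p}$. This step has to be argued by showing that the particular truncation chosen is matched so that its difference with $\zeta_{p^{2}}\in K_{2,m}$, once rescaled by the prescribed power of $p^{1/p^{m}}\in K_{2,m}$, descends to $K_{2,m}$; the transfinite Newton algorithm of \Cref{algorithmtheory} is exactly what produces the cancellation patterns needed, and I would extract the membership from the fact that the $\zetax$-dependent part of the truncation can be recognized as an element of $K_{2,m}$ in disguise (for instance, via $\alpha\deftobe\zetax p^{1/(p(p-1))}$ being, up to a root of unity, a $p(p-1)$-th root of $-p$ already available in $K_{2,0}$). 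Combining the membership claim with the valuation identities completes the proof.
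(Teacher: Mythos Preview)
Your valuation computations are correct and mirror the paper's proof. You are also right that membership in $K_{2,m}$ is the only nontrivial point remaining. However, the specific mechanism you propose fails: the element $\alpha=\zetax\, p^{1/(p(p-1))}$ is indeed a $p(p-1)$-th root of $-p$, but it does \emph{not} lie in $K_{2,0}=\bbQ_p(\zeta_{p^2})$. One sees this by Kummer theory over $\bbQ_p(\zeta_p)$: we have $\alpha^p=-\zetax\, p^{1/(p-1)}\in\bbQ_p(\zeta_p)$, an element of valuation $\tfrac{1}{p-1}$, whereas $K_{2,0}=\bbQ_p(\zeta_p)(\zeta_p^{1/p})$ corresponds to the Kummer class of the \emph{unit} $\zeta_p$. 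Since $\tfrac{1}{p-1}\notin\tfrac{p}{p-1}\bbZ$, the class of $\alpha^p$ cannot lie in the subgroup generated by the class of $\zeta_p$, so $\bbQ_p(\zeta_p,\alpha)\neq K_{2,0}$. The hedge ``up to a root of unity'' does not help, because $\mu_{p(p-1)}=\mu_p\times\mu_{p-1}\subset K_{2,0}$ already.

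The paper's fix is to drop down one level and use $\gamma\deftobe\zetax\, p^{1/(p-1)}$, which \emph{does} lie in $\bbQ_p(\zeta_p)$: this is \Cref{lem:tame_iso}, proved by writing $\tfrac{(\zeta_p-1)^{p-1}}{-p}=1+M$ with $M$ in the maximal ideal and noting that the binomial series $(1+M)^{1/(p-1)}$ converges in $\bbQ_p(\zeta_p)$ since $\binom{1/(p-1)}{k}\in\bbZ_p$. Once $\gamma\in\bbQ_p(\zeta_p)\subset K_{2,m}$ is in hand, every term of the truncation factors through $K_{2,m}$ via
\[
\frac{1}{[k!]}\zetax^k p^{\frac{k}{p(p-1)}}=\frac{1}{[k!]}\,\gamma^k\,(p^{1/p})^{-k},\qquad
\zetax\, p^{\frac{1}{p-1}-\frac{1}{p^l}}=\gamma\cdot(p^{1/p^m})^{-p^{m-l}},
\]
with $\tfrac{1}{[k!]}\in\bbZ_p$ and $p^{1/p^m}\in K_{2,m}$. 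In particular your $\alpha=\gamma/p^{1/p}$ lies in $K_{2,1}$ (just not in $K_{2,0}$), so your outline goes through once you replace ``$K_{2,0}$'' by ``$K_{2,1}$'' and invoke the tame-level lemma for $\gamma$ rather than a claim about $\alpha$ directly. The appeal to ``cancellation patterns'' from the transfinite Newton algorithm is unnecessary here; membership is a purely algebraic factorisation, separate from the analytic expansion.
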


\begin{remark}
    If one can give the explicit formula for the second $\aleph_0$ terms of the canonical expansion of the $p^n$-th primitive root of unity in $\bbL_p$, then it is possible that our strategy can go further to find a uniformizer in more general cases.
\end{remark}

\section{Transfinite Newton algorithm}\label{algorithmtheory}
In this paragraph, we summarize the properties of the $p$-adic Mal'cev-Neumann field $\bbL_p$ in \Cref{subsec:8394}, and expand Kedlaya's proof of the algebraic closeness of $\bbL_p$ into a transfinite Newton algorithm in \Cref{subsec:65394}.

\subsection{The $p$-adic Mal'cev-Neumann field $\bbL_p$}\label{subsec:8394}
Let $\calO_{\breve{\bbQ}_p}=W(\bar{\bbF}_p)$ be the ring of Witt vectors over $\bar{\bbF}_p$ and let $\bbL_p$ be the $p$-adic Mal'cev-Neumann field $\calO_{\breve{\bbQ}_p}((p^{\bbQ}))$ (cf. \cite[Section 4]{Poonen1993}). Every element $\alpha$ of $\bbL_p$ can be uniquely written as
\begin{equation}\sum_{x\in \bbQ}[\alpha_x]p^x, \text{ where } [\cdot]: \bar{\bbF}_p\rightarrow W(\bar{\bbF}_p) \text{ is the Teichm\"{u}ller character.}\end{equation}  For any $\alpha=\sum_{x\in \bbQ}[\alpha_x]p^x\in \bbL_p$, we set $ \mathrm{Supp}(\alpha)=\{x\in \bbQ:  \alpha_x\neq 0\}$, which is well-orderd by the definition of $\bbL_p$.
Thus, we can define the $p$-adic valuation $v_p$ by the formula: $$v_p(\alpha)=\begin{cases} \inf \mathrm{Supp}(\alpha), & \text{ if } \alpha\neq 0; \\ \infty, & \text{if } \alpha=0\end{cases} .$$
The field $\bbL_p$ is complete for the $p$-adic topology and it is also algebraically closed. Moreover, it is the maximal complete immediate extension\footnote{A valued field extension $(E,w)$ of  $(F,v)$ is an immediate extension, if  $(E,w)$ and $(F,v)$ have the same residue field. A valued field  $(E,w)$  is maximally complete if it has no immediate extensions other than $(F,v)$ itself.} of $\overline{\bbQ}_p$.
\begin{remark}$\bbL_p$ is spherical complete\footnote{A valued field is said to be spherical complete, if the intersection of every decreasing sequence of closed balls is nonempty.}, and $\bbC_p$ is not spherical complete. The field $\bbC_p$ of $p$-adic complex numbers can be continuously embedded into $\bbL_p$.
\end{remark}
Given $\alpha\in \bbL_p$, for $x\in\bbQ$, we denote the coefficient of $p^x$ in the expansion of $\alpha$ by $[C_x(\alpha)]\in \calO_{ \breve{\bbQ}_p}$. Then $C_x(\alpha)\in\bar{\bbF}_p$ equals to $[C_x(\alpha)]$ modulo $p$. This gives a map
$$ C: \bbQ\times\bbL_p\rightarrow  \bar{\bbF}_p;  (x, \alpha)\mapsto C_x(\alpha) .$$The following lemma summaries the basic properties of the map $C$.
\begin{lemma}\label{lemma_c_g}For every $x,y\in \bbQ$ and $\alpha,\beta\in \bbL_p$, we have
    \begin{enumerate}
        \item if $v_p(\alpha)>x$, then $C_x(\alpha)=0$;
        \item $C_x(p^{-y}\alpha)=C_{x+y}(\alpha)$;
        \item for every $\bar{u}\in\bar{\bbF}_p$ and $u=[\bar{u}]\in\calO_{ \breve{\bbQ}_p} $, we have $\bar{u}C_x(\alpha)=C_x(u\alpha)$;
        \item if $v_p(\alpha), v_p(\beta)\geq x$, then $C_x(\alpha)\pm C_x(\beta)=C_{x}(\alpha\pm \beta)$.
    \end{enumerate}
\end{lemma}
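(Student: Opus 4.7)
The plan is to verify the four assertions in turn, relying on the definition of the canonical Teichm\"uller expansion of elements of $\mathbb{L}_p$ together with one basic fact about Witt vector addition. Items (1)--(3) are essentially unpackings of the definition. For (1), since $v_p(\alpha)=\inf\mathrm{Supp}(\alpha)$, the hypothesis $x<v_p(\alpha)$ forces $x\notin\mathrm{Supp}(\alpha)$, so $\alpha_x=0$ by construction. For (2), I would write $\alpha=\sum_{z\in\bbQ}[\alpha_z]p^z$, multiply termwise by $p^{-y}$ (which is central in $\bbL_p$) to obtain $p^{-y}\alpha=\sum_z[\alpha_z]p^{z-y}$, and reindex via $z'=z-y$ to read off the coefficient of $p^x$ as $\alpha_{x+y}$; since this new expansion already has Teichm\"uller coefficients it is the canonical one. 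For (3), I would use the multiplicativity of the Teichm\"uller map, namely $[\bar u]\cdot[\alpha_z]=[\bar u\,\alpha_z]$, to compute $u\alpha=\sum_z[\bar u\,\alpha_z]p^z$, whence $C_x(u\alpha)=\bar u\,\alpha_x=\bar uC_x(\alpha)$.

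The substantive step is (4). My approach is to separate the ``$p^x$ layer'' from the strictly higher layers. Write $\alpha=[\alpha_x]p^x+\tilde\alpha$ and $\beta=[\beta_x]p^x+\tilde\beta$ with $v_p(\tilde\alpha),v_p(\tilde\beta)>x$ (these split-offs exist because well-ordering of the support lets us isolate the coefficient at $x$). Then
\[
\alpha\pm\beta=\bigl([\alpha_x]\pm[\beta_x]\bigr)p^x+(\tilde\alpha\pm\tilde\beta),
\]
and the second summand has $v_p\geq$ the next element of $\mathrm{Supp}(\alpha)\cup\mathrm{Supp}(\beta)$ above $x$, in particular $>x$, so by (1) it contributes $0$ to $C_x$. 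For the first summand I would invoke the standard Witt identity
\[
[\alpha_x]\pm[\beta_x]=[\alpha_x\pm\beta_x]+pw
\]
for some $w\in\mathcal{O}_{\breve\bbQ_p}$; then $pw\cdot p^x$ has valuation $\geq x+1>x$ and likewise contributes nothing to $C_x$. Hence $C_x(\alpha\pm\beta)=\alpha_x\pm\beta_x=C_x(\alpha)\pm C_x(\beta)$.

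The only point requiring care --- and what I would regard as the main (minor) obstacle --- is the bookkeeping in (4): one must check that when the two pieces $([\alpha_x]\pm[\beta_x])p^x$ and $\tilde\alpha\pm\tilde\beta$ are reassembled into a single canonical Teichm\"uller series, no carry from strictly higher $p$-levels can propagate down to level $x$. This is exactly what the Witt vector identity $[a]\pm[b]\equiv[a\pm b]\pmod p$ guarantees, since every ``correction'' term lives in $p\,\mathcal{O}_{\breve\bbQ_p}$ and therefore sits at a level $\geq x+1$ after multiplication by $p^x$; combined with the hypothesis $v_p(\tilde\alpha),v_p(\tilde\beta)>x$, this makes level $x$ entirely determined by $\alpha_x$ and $\beta_x$.
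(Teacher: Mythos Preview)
The paper states this lemma without proof; it is treated as a collection of elementary consequences of the definition of the canonical Teichm\"uller expansion in $\mathbb{L}_p$. Your argument is correct and is essentially the standard one. Items (1)--(3) are, as you say, direct readings of the uniqueness of the expansion together with the multiplicativity of the Teichm\"uller map. Your handling of (4) via the Witt identity $[a]\pm[b]\equiv[a\pm b]\pmod{p}$ in $W(\bar{\mathbb{F}}_p)$ is exactly the right mechanism: once you isolate $[\alpha_x\pm\beta_x]p^x$ plus a remainder of valuation strictly greater than $x$, uniqueness of the canonical expansion (applied to that remainder) shows no contribution can appear at level $x$. The ``no downward carry'' observation you flag is precisely why the hypothesis $v_p(\alpha),v_p(\beta)\geq x$ is needed.
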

\subsection{Transfinite Newton algorithm}\label{subsec:65394}
\begin{definition}[Newton polygon]\leavevmode Suppose $(K,v)$ is a valued field with value group $\bbQ$. Let $J(T)=\sum_{i=0}^n a_{n-i}T^i\in K[T]$ be a nonzero polynomial. For $0\leq i\leq n$, we have the points $(i, v(a_{i}))\in \bbN\times \bar{\bbR}$, where $\bar{\bbR}= \bbR\cup\{+\infty\}$. If $a_i=0$, $(i,v(a_i))$ is regarded as $Y_{+\infty}$, the point at infinity of the positive vertical axis.
    \begin{enumerate}
        \item Define the \textbf{Newton polygon} $\Newt{J}$ of $J(T)$ as the lower boundary of the convex hull of the points $(i,v(a_i))$ for $i=0,\cdots,n$. As a consequence, $\Newt{J}$ is a function on $\bbR_{\geq 0}$ with values in $\bar{\bbR}$.
        \item The integers $m$ such that $(m,v(a_m))$ are vertices of $\Newt{J}$ are called the \textbf{breakpoints}, and we denote by $m_{\max}^J$ the \textbf{largest breakpoint} less than $n$.
        \item Given two adjacent breakpoints $m^J_1<m^J_2$, denote by $s^J_{m_1}=\frac{v(a_{m^J_2})-v(a_{m^J_1})}{m^J_2-m^J_1}$, the \textbf{slope} of constituent segment of $\Newt{J}$ with endpoints $(m^J_1,v(a_{m^J_1}))$ and $(m^J_2,v(a_{m^J_2}))$. The \textbf{largest slope} is denoted by $s_{\max}^J= s_{m_{\max}^J}^J=\frac{v(a_n)-v(a_{m_{\max}^J})}{n-m_{\max}^J}$. If $(n,v(a_n))=Y_{+\infty}$ (i.e. $a_n=0$)\footnote{Notice that if $m$ is a breakpoint, then $(m,v(a_m))=Y_{+\infty}\Leftrightarrow m=n$ and $a_n=0$.}, we regard $s_{\max}^J=\infty$. Thus, $s_{\max}^\bullet$ is a map from $K[T]$ to $\bbQ\cup\{\infty\}$.
    \end{enumerate}
    We will omit the superscript $J$ if there is no confusion.
\end{definition}
Let $P(T)=a_0T^n+a_1 T^{n-1}+\cdots+ a_n\in \bbL_p[T] $ be a polynomial with $a_n\neq 0$.  For any $u\in\calO_{\bbL_p}^{*} $, set
$$P_u(T)=P(T+ up^{s^P_{\max}}),$$
where $s^P_{\max}$ is the maximal slope of the Newton polygon of $P$.
\begin{lemma}\label{lem:15792}Let $P(T)=a_0T^n+a_1 T^{n-1}+\cdots+ a_n\in \bbL_p[T] $ be a polynomial with $a_n\neq 0$.  For any $u\in\calO_{\bbL_p}^{*} $, we write $P_u(T)=\sum_{i=0}^{n}b_{n-i}T^i$.

    Then one has:
    \begin{enumerate}
        \item The Newton polygons $\Newt{P_u}$ and $\Newt{P}$ are identical in the range $\left[0, m_{\max}^P\right]$;
        \item If $m_{\max}^{P}<k\leq n$, then the point $(k, v_p(b_k))$ is on or above $\Newt{P}$, in other words, we have
              $$v_p(b_k)\geq v_p(a_{m_{\max}^P})+s_{\max}^P(k-m_{\max}^P).$$

    \end{enumerate}
\end{lemma}
\begin{proof}For simplification of notations, we set $s=s_{\max}^P$ and $m=m_{\max}^{P}$. We calculate the $p$-adic valuation of
    $$b_k=\sum_{j=0}^{k}a_{k-j}\binom{n-k+j}{j}u^j p^{sj} \textbf{.}$$
    Note that $v_p\left(a_{k-j}\binom{n-k+j}{j}u^j p^{sj}\right)=v_p(a_{k-j})+sj$.
    \begin{enumerate}[wide, labelindent=0pt]
        \item \par Suppose $k\leq m$ is a breakpoint of $\Newt{P}$. If $j>0$, one observes
              $$v_p(a_{k-j})+sj=v_p(a_k)+j\left(s-\frac{v_p(a_k)-v_p(a_{k-j})}{k-(k-j)}\right).$$
              Since $s$ is the maximal slope of $\Newt{P}$ and $i$ is a breakpoint, one has $\frac{v_p(a_k)-v_p(a_{k-j})}{k-(k-j)}<s$. In other words, for all $j>0$, we have $v_p\left(a_{k-j}\binom{n-k+j}{j}u^j p^{sj}\right)>v_p(a_k)$. As a consequence, in this case, we have $v_p(b_k)=v_p(a_k)$.
              \par Now suppose that $k< m$ is not a breakpoint of $\Newt{P}$. Let $m^P_1<m^P_2$ be two adjacent breakpoints of $P$ such that $m^P_1<k<m^P_2$. We claim that: for all $0\leq j\leq k$, we have\begin{equation}\label{claim}v_p(a_{k-j})+sj\geq (k-m^P_1)s_{m^P_1}+v_p(a_{m^P_1}).\end{equation}
              This claim implies that
              $$v_p(b_k)\geq (k-m^P_1)s_{m^P_1}+v_p(a_{m^P_1}) ,$$
              i.e. the point $(k,v_p(b_k))$ is on or above $\Newt{P}$.
              \par In the following, we prove the claim \Cref{claim}. Since $s_{m^P_1}<s$, one has
              $$sj-(k-m^P_1)s_{m^P_1}\geq s_{m^P_1}(m^P_1-(k-j)).$$
              \begin{enumerate}
                  \item If  $k-j=m^P_1$, we have $$v_p(a_{k-j})+sj=v_p(a_{m^P_1})+sj\geq v_p(a_{m^P_1}) +s_{m^P_1}j=v_p(a_{m^P_1}) +s_{m^P_1}(k-m^P_1) .$$
                  \item If $k-j<m^P_1$, we have
                        $$\frac{sj-(k-m^P_1)s_{m^P_1}}{m^P_1-(k-j)}\geq s_{m^P_1}\geq \frac{v_p(a_{m^P_1})-v_p(a_{k-j})}{m^P_1-(k-j)}.$$
                  \item If $k-j>m_1^P$, one has
                        $$\frac{v_p(a_{k-j})-v_p(a_{m^P_1})}{(k-j)-m^P_1}\geq s_{m^P_1}\geq \frac{s_{m^P_1}(k-m^P_1)-sj}{(k-j)-m^P_1} .$$
              \end{enumerate}
        \item  The second assertion follows from the same discussion.
    \end{enumerate}
\end{proof}
\begin{definition}
    For any polynomial $P(T)=a_0T^n+a_1 T^{n-1}+\cdots+ a_n\in \bbL_p[T] $, we set $s=s_{\max}^P$ and $m=m_{\max}^{P}$. We define a polynomial
    $$\mathrm{Res}_P(T)=\sum_{k=0}^{n-m}C_0\left(a_{n-k}p^{-v_p(a_{m})-s(n-m-k)}\right)T^k \in \bar{\bbF}_p[T],$$ called the \textit{residue polynomial} associated to $P(T)$.
\end{definition}
\begin{remark}
    From the geometric point of view, the residue polynomial can be constructed as follows:
    \begin{enumerate}
        \item Intercept the segment with maximal slope.
        \item Record those coefficients $a_i$ of $P(T)$ with $(i, v_p(a_i))$ lying on this segment as $b_i$. The other coefficients $b_i$ should be recorded as $0$.
        \item Using the coefficients $(b_i)$ in the previous step and the map $\bbL_p\rightarrow \bar{\bbF}_p, \alpha\mapsto C_{v_p(\alpha)}(\alpha)$, one can construct the polynomial $$\mathrm{Res}_P(T)=\sum_{k=0}^{n-m}C_{v_p(a_{n-k})}\left(a_{n-k}\right)T^k\in \bar{\bbF}_p[T].$$  Note that $v_p(a_{n-k})=v_p(a_{m})+s(n-m-k)$ and we have $C_{v_p(\alpha)}(\alpha)=C_0(\alpha\cdot p^{-v_p(\alpha)})$  by \Cref{lemma_c_g}.
    \end{enumerate}
\end{remark}
\begin{proposition}\label{keyprop}
    Let $P(T)=a_0T^n+a_1 T^{n-1}+\cdots+ a_n\in \bbL_p[T] $ be a polynomial with $a_n\neq 0$ and $\mathrm{Res}_P(T)\in\bar{\bbF}_p[T] $ its residue polynomial.
    Let $c\in \bar{\bbF}_p$ be a root of $\mathrm{Res}_P(T)$ with multiplicity $q$. We set
    $$P_{[c]}(T)=P(T+ [c]p^{s^P_{\max}})=\sum_{i=0}^{n}b_{n-i}T^i.$$Then we
    have: \begin{enumerate}
        \item $n-q$ is a breakpoint of $\Newt{P_{[c]}}$;
        \item in the range $\left[0,n-q\right]$,  $\Newt{P}$ is identical with $\Newt{P_{[c]}}$;
        \item the remaining slope(s) of $\Newt{P_{[c]}}$ are strictly greater than $s_{\max}^P$.
    \end{enumerate}
\end{proposition}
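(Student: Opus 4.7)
The plan is to expand
\[
P_{[c]}(T) = \sum_{k=0}^{n} b_{n-k}\, T^{n-k}, \qquad b_k = \sum_{i=0}^{k} a_i \binom{n-i}{k-i} [c]^{\,k-i}\, p^{\,s(k-i)}, \quad s \deftobe s_{\max}^P,
\]
via the binomial theorem, and then read off $\Newt{P_{[c]}}$ by combining \Cref{lem:15792} with the multiplicity data of $c$ as a root of $R \deftobe \Res_P$. \Cref{lem:15792}(1) already handles $k \leq m \deftobe m_{\max}^P$, giving $v_p(b_k) = v_p(a_k)$, so the two polygons coincide up to index $m$. For $k > m$, \Cref{lem:15792}(2) forces $(k, v_p(b_k))$ to lie on or above the rightmost slope-$s$ edge of $\Newt{P}$, and the remaining task is to decide when that lower bound is attained.

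The heart of the argument is to identify the leading Teichm\"uller coefficient of $b_k$ at valuation $v_p(a_m) + s(k-m)$ with the coefficient of $T^{n-k}$ in the shifted residue polynomial $R(T+c)$. Applying \Cref{lemma_c_g} termwise to the expression above --- using that indices $i < m$ contribute nothing (the Newton polygon from $i$ to $m$ has average slope strictly less than $s$), while an index $m \leq i \leq k$ contributes nontrivially only when $(i, v_p(a_i))$ lies on the rightmost edge --- one obtains
\[
C_{v_p(a_m) + s(k-m)}(b_k) \;=\; \sum_{\substack{m \,\leq\, i \,\leq\, k \\ (i,\, v_p(a_i))\ \text{on edge}}} \overline{\binom{n-i}{k-i}}\; \bar a_i\; c^{\,k-i} \;=\; [T^{n-k}]\,R(T+c),
\]
where $\bar a_i \deftobe C_{v_p(a_i)}(a_i)$ and the last equality comes from reading the definition of $R$ after the substitution $l = i$, $j = n - k$, together with $\binom{n-i}{n-k} = \binom{n-i}{k-i}$.

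Writing $R(T+c) = T^q\, \tilde R(T)$ with $\tilde R(0) \neq 0$ then converts the multiplicity hypothesis directly into valuation data: for $n - q < k \leq n$ the coefficient $[T^{n-k}]\,R(T+c)$ vanishes, so $v_p(b_k) > v_p(a_m) + s(k-m)$; while $[T^q]\,R(T+c) = \tilde R(0) \neq 0$ gives $v_p(b_{n-q}) = v_p(a_m) + s(n-q-m)$ exactly. Statements (1) and (2) now fall out: for $m \leq k \leq n-q$ every point $(k, v_p(b_k))$ lies on or above the line of slope $s$ through $(m, v_p(a_m))$, with equality at both endpoints $k = m$ and $k = n-q$, so that $\Newt{P_{[c]}}$ restricted to $[m, n-q]$ is precisely the same single slope-$s$ segment as in $\Newt{P}$; combined with the case $k \leq m$ this gives (2), and the change of behaviour at $k = n-q$ gives (1). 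Statement (3) follows because for $k > n-q$ the strict inequality $v_p(b_k) > v_p(a_m) + s(k-m)$ implies that the slope from $(n-q, v_p(b_{n-q}))$ to any later point $(k, v_p(b_k))$ exceeds $s$.

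The main obstacle is the identification in the second paragraph: the careful bookkeeping required to keep binomial coefficients, Teichm\"uller reductions, and the ``on-edge'' condition aligned so that the leading coefficient of $b_k$ matches precisely $[T^{n-k}]\,R(T+c)$. Once that identification is in hand, everything else is either a direct application of \Cref{lem:15792} or a straightforward consequence of the factorisation $R(T+c) = T^q \tilde R(T)$.
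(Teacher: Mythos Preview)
Your proposal is correct and follows essentially the same approach as the paper's proof: both arguments hinge on the identification $C_{v_p(a_m)+s(k-m)}(b_k) = [T^{n-k}]\,\Res_P(T+c)$ for $m \leq k \leq n$, established via \Cref{lemma_c_g} and the valuation inequalities from \Cref{lem:15792}, after which the multiplicity-$q$ factorisation $\Res_P(T+c) = T^q \tilde R(T)$ immediately yields the three conclusions. The paper carries out the same computation with the substitution $j = k-i$ and states the key inequality as $v_p(a_{k-j}) + sj > v_p(a_m) + s(k-m)$ for $j > k-m$ (your ``indices $i < m$ contribute nothing''), but the logic is identical.
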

\begin{proof}Set $s=s_{\max}^P$ and $m=m_{\max}^{P}$.  Recall that, since $m$ is the maximal breakpoint of the Newton polygon $\Newt{P}$, for $m\leq n-k$, we have $$v_p(a_{n-k}p^{-v_p(a_m)-s(n-m-k)})=v_p(a_{n-k})-v_p(a_m)-s(n-m-k)\geq 0.$$
    Thus, $C_0\left(a_{n-k}p^{-v_t(a_m)-s(n-m-k)}\right)\in \bar{\bbF}_p$ is the image of $a_{n-k}p^{-v_p(a_m)-s(n-m-k)}$ under the canonical projection from $\bbL_p$ to its residue field $\bar{\bbF}_p$.

    Let $\mathrm{Res}_P(T+c)=\sum_{k=0}^{n-m}\kappa_{n-k}T^k$, then we have
    \begin{equation}\label{coeff_x_n_i}
        \begin{split}
            \kappa_{n-k}&= \sum_{i=n-k}^{n-m} C_0\left(a_{n-i}p^{-v_p(a_m)-s(n-m-i)}\right)\binom{i}{n-k}c^{i-(n-k)} \\ &= \sum_{j=0}^{k-m}C_0\left(p^{-v_p(a_m)-s(k-m)}a_{k-j}p^{sj}\right)\binom{n-k+j}{j}c^j    .
        \end{split}
    \end{equation}

    By the basic properties of the map $C_x(\alpha)$ (cf. \Cref{lemma_c_g}), we have
    \begin{align*}
        \text{\labelcref{coeff_x_n_i}}= & \sum_{j=0}^{k-m}\binom{n-k+j}{j}c^j  C_{v_p(a_m)+s(k-m)}\left( a_{k-j}p^{sj}\right)                                               \\
        =                               & \sum_{j=0}^{k-m}\binom{n-k+j}{j} C_{v_p(a_m)+s(k-m)}\left([c^j] a_{k-j}p^{sj}\right).\addtocounter{equation}{1}\tag{\theequation}
        \label{shit}
    \end{align*}
    A similar argument in the proof of \Cref{claim} in \Cref{lem:15792} shows that:
    $$v_p(a_{k-j})+sj\begin{cases}>v_p(a_m)+s(k-m),& \text{if } j>k-m \\ \geq v_p(a_m)+s(k-m),& \text{if } j\leq k-m\end{cases}.$$
    Again by \Cref{lemma_c_g},  for $j>k-m$, we have  $C_{v_p(a_m)+s(k-m)}\left([c^j]  a_{k-j}p^{sj}\right)=0$, and
    \begin{align*}
        \text{\Cref{shit}} & =\sum_{j=0}^{k}\binom{n-k+j}{j} C_{v_p(a_m)+s(k-m)}\left([c^j] a_{k-j}t^{sj}\right) \\
                           & =C_{v_p(a_m)+s(k-m)}\left(\sum_{j=0}^{k}\binom{n-k+j}{j} c^j a_{k-j}t^{sj}\right)   \\
                           & =C_{v_p(a_m)+s(k-m)}\left(b_k\right).
    \end{align*}

    Since $a_m=b_m$, one can conclude that,  for $0\leq k\leq n-m$, the coefficient $\kappa_{k}$ of $T^{n-k}$ in $\mathrm{Res}_P(T+c)$ equals to $C_{v_p(b_m)+s(k-m)}\left(b_k\right)$. Since $c$, as a root of $\mathrm{Res}_P(T)$, has multiplicity $q$, $T^q$ has non-zero coefficient in $\mathrm{Res}_P(T+c)$, i.e. we have  $$\kappa_{n-q}=C_{v_p(b_m)+s(n-q-m)}\left(b_k\right)(b_{n-q})\neq 0.$$ On the other hand, we have $v_p(b_{n-q})\geq v_p(b_m)+s(n-q-m)$. Thus, we have
    $$v_p(b_{n-q})= v_p(b_m)+s(n-q-m).$$
    If $k>n-q$, the coefficient $\kappa_{k}$ of $T^{n-k}$ in $\mathrm{Res}_P(T+c)$  is $0$, thus $v_p(b_{k})>v_t(b_m)+s(k-m)$.

\end{proof}
\begin{theorem}[Kedlaya]
    The field $\bbL_p$ is algebraic closed.
\end{theorem}
\begin{proof}
    Let $f(T)$ be a non-constant polynomial in $\bbL_p[T]$. To construct a root of $f(T)$ in $\bbL_p$ is equivalent to finding an element $r\in \bbL_p$ such that the maximal slope of the Newton polygon $\Newt{f(T+r)}$ is $\infty$.

    Let $\aleph_1$ be the minimal uncountable ordinal. We define a sequence of elements $(r_w)_{w<\aleph_1}$ in $\bbL_p$ by transfinite induction.
    \begin{itemize}
        \item For $w=0$, we set $r_0=0$ and let $\tau(0)$ be a root of $\mathrm{Res}_{f}(T)$ in $\bar{\mathbb{F}}_p$ and $d(0)$ be the maximal slope of the Newton polygon $\Newt{f(T+0)}$.
        \item Let $0<\omega<\aleph_1$ be a successor ordinal. If $f(r_{\omega-1})=0$, then we set $r_\omega=r_{\omega-1}=r_{\omega-1}+[0]\cdot p^\infty$, i.e. $\tau(\omega-1)=0$ and $d(\omega-1)=\infty$. Now suppose $f(r_{\omega-1})\neq 0$ and $r_\alpha$ has been constructed for any ordinal $\alpha<\omega$. Define $\tau(\omega-1)$ to be a root of $\mathrm{Res}_{f(T+r_{\omega-1})}(T)$ in $\bar{\bbF}_p$ and $d(\omega-1)$ to be the maximal slope of $\Newt{f(T+r_{\omega-1})}$. Then we set $r_{\omega}= r_{\omega-1}+  [\tau(\omega-1)]p^{d(\omega-1)}\in\bbL_p$.
        \item For limit ordinal $\omega_{\lim}$, we set $r_{\omega_{\lim}}=\sum_{\alpha<\omega_{\lim}}[\tau(\omega)]p^{d(\alpha)}\in \bbL_p$.
    \end{itemize}

    For all ordinals $\alpha<\beta$, by \Cref{keyprop}, we know that $d(\alpha)\leq d(\beta)$. Moreover, we have $d(\alpha)=d(\beta)$ if and only if $d(\omega)=\infty$ holds for all $\omega \geq \alpha$. Since there is no injection from $\aleph_1$ to $\bbQ$, there exist an ordinal $\phi<\aleph_1$ such that $d(\phi)=\infty$, i.e. $r_\phi$ is a root of $f(T)$ in $\bbL_p$.
\end{proof}
We summarize the above theorem into the following algorithm\footnote{This piece of pseudo-code dose not fit the definition of an algorithm in the sense of computer science, since it does not stop in finite steps and it does not treat the limit ordinal. We give this pseudo-code here to clarify the main ingredient of Kedlaya's proof.}:
\begin{algorithm}[H]
    \caption{transfinite Newton algorithm for $\bbL_p$}\label{pseudocode}
    \begin{algorithmic}
        \INPUT A non-constant polynomial $f(T)\in \bbL_p[T]$
        \OUTPUT A root of $f(T)$ in $\bbL_p$
        \Function{Newton}{$f$}
        \State $r\gets 0$
        \State $s_{\max}\gets 0,m_{\max}\gets 0,c\gets 0$
        \State $\mathrm{Res}_{\Phi}(T)\gets 0$
        \State $\Phi(T)\gets f(T)$ \Comment{We denote the coefficient of $T^i$ in $\Phi$ as $b_{n-i}$,where $n=\text{deg}(\Phi)$.}
        \While{$\Phi(0)\neq 0$}
        \State $m_{\max}\gets m_{\max}^{\Phi}$
        \State $s_{\max}\gets s_{\max}^{\Phi}$
        \State $\mathrm{Res}_{\Phi}(T)\gets\sum_{k=0}^{n-m_{\max}}C_{v_p(b_m)+s_{\max}(n-m_{\max}-k)}\left(b_{n-k}\right)T^k$
        \State $c\gets$ any root of $\mathrm{Res}_{\Phi}(T)$ in $\bar{\bbF}_p$
        \State $r\gets r+[c]\cdot p^{s_{\max}}$
        \State $\Phi(T)\gets \Phi(T+[c]\cdot p^{s_{\max}})$
        \EndWhile
        \State \Return $r$
        \EndFunction
    \end{algorithmic}
\end{algorithm}

\begin{definition}
    Let $l$ be a natural number. We call the value of $\Phi(T)$ (resp. $\mathrm{Res}_\Phi(T)$ and $r$) in the above pseudo-code after the loop iterates for $l$ times, the $l$-th approximation polynomial (resp. the $l$-th residue polynomial and the $l$-th approximation of a root of $\Phi(T)$).
\end{definition}

\section{Application of the transfinite Newton algorithm to the $p^n$-th cyclotomic polynomial}

\paragraph*{Convention} In this paragraph, we assume $n$ is a positive integer.

Let $\Phi_{p^n}(T)=\sum_{k=0}^{p-1}T^{p^{n-1}k}$ be the $p^n$-th cyclotomic polynomial, whose Newton polygon is a segment of slope $0$ with maximal breaking point $(0,0)$. In \Cref{statement}, we apply the transfinite Newton algorithm on $\Phi_{p^n}(T)$ to get the first $\aleph_0$ terms of a $p^n$-th primitive root in $\bbL_p$ and discuss the relations among all the posibilities of the first $\aleph_0$ terms of $p^n$-th primitive roots for $n\geq 2$. Besides that, we give an analogous expansion for $\zeta_p$. We establish our main combinatorial techniques in \Cref{Bellpoly} and complete the proof of \Cref{maintheorem1} in \Cref{Estimation}. Finally, using the expansion of $\zeta_{p^2}$, we give a uniformizer of $K_{2,m}$ for $m\geq 2$ in \Cref{subsec:42109}.

\subsection{First $\aleph_0$ terms of a root of $\Phi_{p^n}(T)$ }\label{statement}

The $0$-th residue polynomial of $\Phi_{p^n}(T)$ in the transfinite Newton algorithm is the polynomial $\frakA_{0,n}(T)= \sum_{k=0}^{p-1}T^{p^{n-1}k}$, and the canonical element $1\in\bar{\bbF}_p$ is a root of $\frakA_{0,n}(T)$. With these initial inputs, the first approximation polynomial is $$\Phi^{(1,n)}(T)=\sum_{k=0}^{p-1}(T+1)^{p^{n-1}k} ,$$which has $p^{n-1}(p-1)$ roots with the same valuation $v_p(\zeta_{p^n}-1)=\frac{1}{\varphi(p^n)}=\frac{1}{p^{n-1}(p-1)}>0$. As a consequence, the Newton polygon $\Newt{\Phi^{(1,n)}}$ is a segment of slope $\fraks_{1,n}=\frac{1}{p^{n-1}(p-1)}$ with maximal breakpoint $\frakm_{1,n}=(0,0)$. Thus, the first residue polynomial is
$$\frakA_{1,n}(T)=T^{p^{n-1}(p-1)}+1=(T^{p-1}+1)^{p^{n-1}}\in\overline{\bbF}_p[T],$$ which has $\frakz_{1,n}=(-1)^n\zetax\in \bar{\bbF}_p$ as a root\footnote{Adding the sign $(-1)^n$ to the root gives us a chance to get a compatible system $\left(\zeta_{p^n}\right)_{n\geq 1}$ of $p^n$-th primitive roots, i.e. $\zeta_{p^n}=\zeta_{p^{n+1}}^p$ (cf. \Cref{coro:54977}).} with multiplicity $\frakq_{1,n}=p^{n-1}$.

\begin{figure}[H]
    \centering
    \begin{tikzpicture}[every node/.style={scale=0.7},scale=1.5]
        \draw [<->] (0,2) -- (0,0) -- (5,0);
        \draw [thick] (0,0) -- (4.5,1.5);
        \node [below] at (0,0) {$0$};
        \node [below] at (4.5,0) {$p^{n-1}(p-1)$};
        \draw [dotted] (4.5,0) -- (4.5,1.5);
        \draw [dotted] (0,1.5) -- (4.5,1.5);
        \node [left] at (0,1.5) {$1$};
    \end{tikzpicture}
    \caption{$\Newt{\Phi^{(1,n)}}$}
\end{figure}

\begin{definition}\label{rmk:44996} The transfinite Newton algorithm can produce all the roots of a given polynomial $\Phi(T)\in \bbL_p[T]$. But note that we do not know whether the transfinite Newton algorithm can find all the roots of $\Phi_{p^n}(T)$,  for $n\geq 2$, by repeatedly choosing different roots of the residue polynomial in each step. We can get at least $p-1$ roots of $\Phi_{p^n}$ in this way since there are $p-1$ choices of $\frakz_{1,n}$ in $\bar{\bbF}_p$.

    For the roots of $\Phi_{p^n}(T)$ contructed by the transfinite Newton algorithm, the second assertion of \Cref{maintheorem1} below shows that their first $\aleph_0$ terms have exactly $p-1$ possibilities which are completely determined by the choice of $\frakz_{1,n}$ in $\bar{\bbF}_p$. Moreover, for any root of $\Phi_{p^n}(T)$ , there exists a root constructed by the algorithm such that they have the same first $\aleph_0$ terms.
\end{definition}

Let $\zeta_{p^n}$ be a root of $\Phi_{p^n}(T)$ constructed by the transfinite Newton algorithm with input $\frakz_{1,n}=(-1)^n\zetax$. The following proposition summarizes the above discussion for the initial terms.
\begin{proposition}\label{exp_first_prop}\label{exp_first_rmk}
    One has:
    \begin{enumerate}
        \item $\fraks_{0,n}=0\in \bbQ$, $\frakz_{0,n}=1\in \bar{\bbF}_p$;
        \item $\fraks_{1,n}=\frac{1}{p^{n-1}(p-1)}$, $\frakz_{1,n}=(-1)^n\zetax\in \bar{\bbF}_p$ and the multiplicity of $\frakz_{1,n}=(-1)^n\zetax$ in $\frakA_1(T)$ is $\frakq_{1,n}=p^{n-1}$.
    \end{enumerate}
    In conclusion, the first approximation of $\zeta_{p^n}$ is $\zeta_{p^n}^{(1)}=\Lambda_{1,n}$, where $\Lambda_{1,n}$ is defined to be $1+(-1)^n\zetax p^{\frac{1}{p^{n-1}(p-1)}}$. In other words, we have
    $$\zeta_{p^n}=1+(-1)^n\zetax p^{\frac{1}{p^{n-1}(p-1)}}+o\left(p^{\frac{1}{p^{n-1}(p-1)}}\right).$$
\end{proposition}

For every $\alpha\in\bbL_p$, denote by $\aleph_0(\alpha)$ the first $\aleph_0$ terms of the expansion of $\alpha$ in $\bbL_p$.
The following theorem gives the explicit formula for $\aleph_0\left(\zeta_{p^n}\right)$ with $n\geq 2$:
\begin{theorem}\label{maintheorem1}\label{thm:galcong}
    Let $n\geq 2$ be an integer.
    \begin{enumerate}
        \item Let $\zeta_{p^n}^{(i)}$ be the $i$-th approximation of $\zeta_{p^n}$ in the transfinite Newton algorithm. Then we have
              $$\zeta_{p^n}^{(i)}=
                  \begin{cases}
                      \sum_{k=0}^i \frac{(-1)^{kn}}{[k!]}\zetax^kp^{\frac{k}{p^{n-1}(p-1)}},                        & \text{ for } 0\leq i\leq p-1, \\
                      \zeta_{p^n}^{(p-1)}+ \sum_{l=n}^{i-p+n}(-1)^n\zetax p^{\frac{1}{p^{n-2}(p-1)}-\frac{1}{p^l}}, & \text{ for }  i\geq p,
                  \end{cases}
              $$
              i.e.
              $$\aleph_0\left(\zeta_{p^n}\right)=\sum_{i=0}^{p-1}\frac{\left((-1)^n\zetax\right)^i}{[i!]} p^{\frac{i}{p^{n-1}(p-1)}}+\sum_{j=n}^\infty (-1)^n\zetax p^{\frac{1}{p^{n-2}(p-1)}-\frac{1}{p^j}}.$$
        \item There exists $p-1$ distinct elements $m_0,\cdots,m_{p-2}$ in $\left(\bbZ/p^n\bbZ\right)^\times$ such that
              \begin{enumerate}
                  \item $$\aleph_0\left(\zeta_{p^n}^{m_k}\right)=\sum_{i=0}^{p-1}\frac{\left((-1)^n\zetax^{2k+1}\right)^i}{[i!]} p^{\frac{i}{p^{n-1}(p-1)}}+\sum_{j=n}^\infty (-1)^n\zetax^{2k+1} p^{\frac{1}{p^{n-2}(p-1)}-\frac{1}{p^j}};$$
                  \item $\calR_n=\{m_0,\cdots,m_{p-2}\}$ forms a $\mathrm{mod}\ p$ residue system of $\left(\bbZ/p^n\bbZ\right)^\times$.
              \end{enumerate}
        \item For every $m\in\left(\bbZ/p^n\bbZ\right)^\times$, there exists a unique $m_t\in\calR_n$ such that $\aleph_0\left(\zeta_{p^n}^m\right)=\aleph_0\left(\zeta_{p^n}^{m_t}\right)$.
    \end{enumerate}

\end{theorem}
\begin{proof}
    \begin{enumerate}
        \item

              We sketch the proof of the first assertion and leave the technical  details of each step in next sections.
              We denote the formula on the right-hand side of the theorem by $\Lambda_{i,n}$ and the $i$-th approximation polynomial of $\zeta_{p^n}$ by
              \begin{equation}\Phi^{(i,n)}(T)=\Phi_{p^n}\left(T+\zeta_{p^n}^{(i-1)}\right)=\sum_{k=0}^{p^{n-1}(p-1)}b_{p^{n-1}(p-1)-k}^{(i,n)}T^k\in \bbL_p[T].\end{equation}
              Moreover, we denote by $\frakA_{i,n}(T)\in \bar{\bbF}_p[T]$ the residue polynomial of $\Phi^{(i,n)}(T)$.

              By the transfinite Newton algorithm, it is crucial to determine the following data:
              \begin{enumerate}
                  \item The maximal slope $\fraks_{i,n}$ of the Newton polygon of $\Phi^{(i,n)}(T)$, which gives the support of the desired expansion,
                  \item The residue polynomial $\frakA_{i,n}(T)$, whose root $\frakz_{i,n}\in\bar{\bbF}_p$ with multiplicity $\frakq_{i,n}$ gives the coefficient $\alpha_{\fraks_{i,n}}$ of the desired expansion.
              \end{enumerate}

              To prove the theorem, one only needs to check that the supports and the coefficients in the $i$-th step do coincide with those of $\Lambda_{i,n}$. The strategy of the proof is the following:
              \begin{enumerate}[listparindent=\parindent]
                  \item \textbf{Describe the initial terms} (cf. \Cref{exp_first_prop}): in fact, we have $\fraks_{0,n}=0, \fraks_{1,n}=\frac{1}{p^{n-1}(p-1)}$, $\frakz_{0,n}=1$ and $\frakz_{1,n}=(-1)^n\zetax$ with multiplicity $\frakq_{1,n}=p^{n-1}$.
                  \item \textbf{Induction on $i$ for  $2\leq i\leq p-1$}. Assume that, for $1\leq j\leq i-1$, we have $\zeta_{p^n}^{(j)}=\Lambda_{j,n}$. In other words, the maximal slope $\fraks_{j,n}$ of the Newton polygon $\Newt{\Phi^{(j,n)}}$ of the $j$-th approximation polynomial  is $\frac{j}{p^{n-1}(p-1)}$ and the $j$-th residue polynomial $\frakA_{j,n}(T)$ has a root $\frakz_{j,n}=\frac{(-1)^{jn}}{j!}\zetax^j\in \bar{\bbF}_p$ with multiplicity $\frakq_{j,n}=p^{n-1}$. We describe the Newton polygon of the $i$-th approximation polynomial $\Phi^{(i,n)}(T)$ as follows.
                        \par By the induction hypothesis and \Cref{keyprop}, for  $1\leq j\leq i-1$, the Newton polygon of $\Phi^{(j,n)}(T)$ and $\Phi^{(j+1,n)}(T)$ are identical in the range $[0,p^{n-1}(p-1)-\frakq_{j}]$, i.e. $[0,p^{n-1}(p-2)]$. Therefore, the Newton polygons $\Newt{\Phi^{(i,n)}}$ and $\Newt{\Phi^{(1,n)}}$ are identical in the range $[0,p^{n-1}(p-2)]$, and $p^{n-1}(p-2)$ is a breakpoint of the Newton polygon $\Newt{\Phi^{(i,n)}}$. As a result, we only need to consider $\Newt{\Phi^{(i,n)}}$ in the range $[p^{n-1}(p-2), p^{n-1}(p-1)]$. In other words, we need to estimate the $p$-adic valuation of $b^{(i,n)}_{p^{n-1}(p-1)-k}$ for $0\leq k\leq p^{n-1}$.
                        \par By the transfinite Newton algorithm and the assumption $\zeta_{p^n}^{(i-1)}=\Lambda_{i-1,n}$, we can obtain the formula for the coefficients $b_{p^{n-1}(p-1)-k}^{(i,n)}$ of the $i$-th approximation polynomials $\Phi^{(i,n)}$ with $0\leq k\leq p^{n-1}$ and their $p$-adic valuation can be calculated by the estimation of the $p$-adic valuation of $\Lambda_{i-1,n}^{p^{n-1}}-1$ and $\Lambda_{i-1,n}^{p^{n}}-1$ established in \Cref{Estimation} (cf. \Cref{premier} and \Cref{seconde} respectively), which relies on the arithmetic properties of incomplete exponential Bell polynomial studied in \Cref{Bellpoly}.

                        \begin{enumerate}
                            \item If $k=0$, we have
                                  $$b_{p^{n-1}(p-1)}^{(i,n)}=\sum_{l=0}^{p-1}\Lambda_{i-1,n}^{lp^{n-1}}=\frac{\Lambda_{i-1,n}^{p^n}-1}{\Lambda_{i-1,n}^{p^{n-1}}-1}. $$
                                  By \Cref{premier} and \Cref{seconde},  we have
                                  \begin{align*}
                                      b_{p^{n-1}(p-1)}^{(i,n)} & = \frac{\frac{(-1)^{i-1}}{i!}\zetax^{i}p^{1+\frac{i}{p-1}}+o\left(p^{1+\frac{i}{p-1}}\right)}{\sum_{l=1}^{i-1}\frac{(-1)^l}{[l!]}\zetax^l p^{\frac{l}{p-1}}+O\left(p^{1+\frac{1}{p(p-1)}}\right)} \\
                                                               & = \frac{\frac{(-1)^{i-1}}{i!}\zetax^{i}p^{1+\frac{i}{p-1}}+o\left(p^{1+\frac{i}{p-1}}\right)}{-\zetax p^{\frac{1}{p-1}}+O\left(p^{\frac{2}{p-1}}\right)}                                          \\
                                                               & =  \frac{(-1)^i}{i!}\zetax^{i-1} p^{1+\frac{i-1}{p-1}}+o\left(p^{1+\frac{i-1}{p-1}}\right) .
                                  \end{align*}

                            \item If $1\leq k\leq p^{n-1}$, we have
                                  \begin{align*}
                                      b_{p^{n-1}(p-1)-k}^{(i,n)} & =\sum_{l=1}^{p-1}\binom{p^{n-1}l}{k}\Lambda_{i-1,n}^{p^{n-1}l-k}                                                                                                                                                                                                                    \\                                                                                                                 &=\sum_{l=1}^{p-1}\left(p^{n-1}l\frac{(-1)^{k-1}}{k}+O(p^n)\right)\Lambda_{i-1,n}^{p^{n-1}l-k} \\
                                                                 & =\frac{(-1)^{k-1}p^{n-1}}{k\Lambda_{i-1,n}^k}\sum_{l=1}^{p-1}l\Lambda_{i-1,n}^{p^{n-1}l}+O(p^n)                                                                                                                                                                                   .
                                  \end{align*}
                                  Together with the elementary identity $\sum_{l=1}^{p-1}l\Lambda_{i-1,n}^{p^{n-1}l}=\frac{p\Lambda_{i-1,n}^{p^n}}{\Lambda_{i-1,n}^{p^{n-1}}-1}-\Lambda_{i-1,n}^{p^{n-1}}\frac{\Lambda_{i-1,n}^{p^{n}}-1}{(\Lambda_{i-1,n}^{p^{n-1}}-1)^2}$, we obtain
                                  $$b_{p^{n-1}(p-1)-k}^{(i,n)}=\frac{(-1)^{k-1}p^{n-1}}{k\Lambda_{i-1,n}^k}\left(\frac{p\Lambda_{i-1,n}^{p^n}}{\Lambda_{i-1,n}^{p^{n-1}}-1}-\Lambda_{i-1,n}^{p^{n-1}}\frac{\Lambda_{i-1,n}^{p^{n}}-1}{(\Lambda_{i-1,n}^{p^{n-1}}-1)^2}\right)+O(p^n) .$$
                                  One can use again the estimation of the $p$-adic valuation of $\Lambda_{i-1,n}^{p^n}-1$ and $\Lambda_{i-1,n}^{p^{n-1}}-1$ in \Cref{Estimation} to deduce that
                                  \begin{equation}
                                      \begin{split}
                                          v_p\left(b_{p^{n-1}(p-1)-k}^{(i,n)}\right)&=v_p\left(\frac{(-1)^{k-1}p^{n-1}}{k\Lambda_{i-1,n}^k}\frac{p\Lambda_{i-1,n}^{p^n}}{\Lambda_{i-1,n}^{p^{n-1}}-1}\right)\\
                                          &=
                                          \begin{cases}
                                              n-v_p(k)-\frac{1}{p-1}\geq 1+\frac{i-1}{p-1}, & 1\leq k<p^{n-1}; \\
                                              1- \frac{1}{p-1},                             & k=p^{n-1},
                                          \end{cases}\label{val_small}
                                      \end{split}
                                  \end{equation}
                                  and
                                  \begin{align*}
                                      C_{\frac{p-2}{p-1}}\left(b_{p^{n-1}(p-1)-p^{n-1}}^{(i,n)}\right) & =C_{\frac{p-2}{p-1}}\left(\frac{(-1)^{p-1}p^{n-1}}{p^{n-1}\Lambda_{i-1,n}^{p^{n-1}}}\frac{p\Lambda_{i-1,n}^{p^n}}{\Lambda_{i-1,n}^{p^{n-1}}-1}\right)
                                      =-\zetax^{-1} .\addtocounter{equation}{1}\tag{\theequation}\label{coeff_p_p_2}
                                  \end{align*}

                        \end{enumerate}

                        In conclusion, the Newton polygon of the $i$-th approximation polynomial $\Phi^{(i,n)}$ has three breakpoints: $0, p^{n-1}(p-2)$ and $p^{n-1}(p-1)$, with maximal breakpoint $\frakm_{i,n}=p^{n-1}(p-2)$ and maximal slope
                        $$\fraks_{i,n}=\frac{v_p\left(b_{p^{n-1}(p-1)}^{(i,n)}\right)-v_p\left(b_{p^{n-1}(p-2)}^{(i,n)}\right)}{p^{n-1}(p-1)-p^{n-1}(p-2)}=\frac{i}{p^{n-1}(p-1)}.$$
                        The $i$-th residue polynomial
                        $$\frakA_{i,n}(T)=-\zetax^{-1}T^{p^{n-1}}+\frac{(-1)^i}{i!}\zetax^{i-1}$$
                        has $\frakz_{i,n}=\frac{(-1)^{in}}{i!}\zetax^i$ as a root with multiplicity $\frakq_r=p^{n-1}$.
                        \begin{figure}[H]
                            \centering
                            \begin{tikzpicture}[every node/.style={scale=0.7},scale=1.5]
                                \draw [<->] (0,4) -- (0,0) -- (5,0);
                                \draw [thick] (0,0) -- (3,1) -- (4.5,3);
                                \draw [dotted] (3,0) -- (3,1);
                                \draw [dotted] (4.5,0) -- (4.5,3);
                                \draw [dotted] (0,1) -- (3,1);
                                \draw [dotted] (0,3) -- (4.5,3);
                                \draw [dotted] (3.3,0) -- (3.3,3.25);
                                \draw [dotted] (3.6,0) -- (3.6,4);
                                \draw [dotted] (3.9,0) -- (3.9,3.1);
                                \draw [dotted] (4.2,0) -- (4.2,3.5);
                                \node [below] at (0,0) {$0$};
                                \node [below] at (3,0) {$p^{n-1}(p-2)$};
                                \node [below] at (4.5,0) {$p^{n-1}(p-1)$};
                                \node [left] at (0,1) {$1-\frac{1}{p-1}$};
                                \node [left] at (0,3) {$1+\frac{i-1}{p-1}$};
                                \node [circle,fill,inner sep=1pt] at (3.3,3.25) {};
                                \node [circle,fill,inner sep=1pt] at (3.6,4) {};
                                \node [circle,fill,inner sep=1pt] at (3.9,3.1) {};
                                \node [circle,fill,inner sep=1pt] at (4.2,3.5) {};
                            \end{tikzpicture}
                            \caption{$\Newt{\Phi^{(i,n)}}$, $2\leq i\leq p-1$}
                        \end{figure}

                  \item \textbf{Induction on $i\geq p$ for $\zeta_{p^n}^{(i)}$}.
                        For the initial term $i=p$,
                        the transfinite Newton algorithm and the results proved in the previous steps imply that:
                        \begin{enumerate}
                            \item The Newton polygons $\Newt{\Phi^{(p,n)}}$ and $\Newt{\Phi^{(p-1,n)}}$ are identical in the range $[0, p^{n-1}(p-2)]$;
                            \item $p^{n-1}(p-2)$ is a breakpoint of $\Newt{\Phi^{(p,n)}}$.
                        \end{enumerate}
                        Therefore, we only need to consider $\Newt{\Phi^{(p,n)}}$ in the range $[p^{n-1}(p-2), p^{n-1}(p-1)]$, i.e. to estimate the $p$-adic valuation of $b_{p^{n-1}(p-1)-k}^{(p,n)}$ for $0\leq k\leq p^{n-1}$. We express $b_{p^{n-1}(p-1)-k}^{(p,n)}$ in terms of $\Lambda_{p-1,n}$ as following:
                        \begin{gather*}
                            b_{p^{n-1}(p-1)-k}^{(p,n)}=
                            \begin{cases}
                                \sum_{l=0}^{p-1}\Lambda_{p-1,n}^{lp^{n-1}}=\frac{\Lambda_{p-1,n}^{p^n}-1}{\Lambda_{p-1,n}^{p^{n-1}}-1},                                                                                                                & \text{ if } k=0;                 \\
                                \frac{(-1)^{k-1}p^{n-1}}{k\Lambda_{p-1,n}^k}\left(\frac{p\Lambda_{p-1,n}^{p^n}}{\Lambda_{p-1,n}^{p^{n-1}}-1}-\Lambda_{p-1,n}^{p^{n-1}}\frac{\Lambda_{p-1,n}^{p^{n}}-1}{(\Lambda_{p-1,n}^{p^{n-1}}-1)^2}\right)+O(p^n), & \text{ if } 1\leq k\leq p^{n-1}.
                            \end{cases}
                        \end{gather*}
                        Again using the estimation of the $p$-adic valuation of $\Lambda_{p-1,n}^{p^{n-1}}-1$ and $\Lambda_{p-1,n}^{p^{n}}-1$ in \Cref{Estimation}, we have
                        \begin{enumerate}
                            \item $$b_{p^{n-1}(p-1)}^{(p,n)}=\frac{\zetax p^{2+\frac{1}{p-1}-\frac{1}{p}}+O\left(p^{2+\frac{1}{p-1}}\right)}{-\zetax p^{\frac{1}{p-1}}+O\left(p^{\frac{2}{p-1}}\right)}=-p^{2-\frac{1}{p}}+o\left(p^{2-\frac{1}{p}}\right) ,$$
                            \item
                                  \begin{gather*}
                                      v_p\left(b_{p^{n-1}(p-1)-k}^{(p,n)}\right)=n-v_p(k)-v_p\left(\Lambda_{p-1,n}^{p^{n-1}}-1\right)=n-v_p(k)-\frac{1}{p-1} \text{ for } k=1,\cdots,p^{n-1}-1 ,
                                  \end{gather*}
                            \item $$b_{p^{n-1}(p-2)}^{(p,n)}=(1+o(1))\frac{(-1)^{p^{n-1}-1}p}{-\zetax p^{\frac{1}{p-1}}+O\left(p^{\frac{2}{p-1}}\right)}=-\zetax^{-1}p^{\frac{p-2}{p-1}}+o\left(p^{\frac{p-2}{p-1}}\right) .$$
                        \end{enumerate}

                        In other words, the valuation of $b_{p^{n-1}(p-1)-k}^{(p,n)}$ is given by
                        $$v_p\left(b_{p^{n-1}(p-1)-k}^{(p,n)}\right)=
                            \begin{cases}
                                2-\frac{1}{p},          & \text{ if } k=0;             \\
                                n-v_p(k)-\frac{1}{p-1}, & \text{ if } 1\leq k<p^{n-1}; \\
                                \frac{p-2}{p-1},        & \text{ if } k=p^{n-1},
                            \end{cases}
                        $$
                        and the coefficient of $b_{p^{n-1}(p-1)}^{(p,n)}$ at $2-\frac{1}{p}$ equals $-1$, the coefficient of $b_{p^{n-1}(p-2)}^{(p,n)}$ at $\frac{p-2}{p-1}$ equals $-\zetax^{-1}$.
                        \par Notice that the segment $L_{p,n}$ with endpoints
                        $$\left(p^{n-1}(p-2),v_p\left(b_{p^{n-1}(p-2)}^{(p,n)}\right)\right)=\left(p^{n-1}(p-2),\frac{p-2}{p-1}\right)$$
                        and
                        $$\left(p^{n-1}(p-1),v_p\left(b_{p^{n-1}(p-1)}^{(p,n)}\right)\right)=\left(p^{n-1}(p-1),\frac{2p-1}{p}\right)$$
                        has slope
                        $$\frac{1}{p^{n-1}}\left(2-\frac{1}{p}-\frac{p-2}{p-1}\right)=\frac{1}{p^{n-2}(p-1)}-\frac{1}{p^n} $$
                        and,  for all $k\in\{1,2,\cdots,p^{n-1}-1\}$,
                        $$n-v_p(k)-\frac{1}{p-1}\geq \frac{p-2}{p-1}+\left(\frac{1}{p^{n-2}(p-1)}-\frac{1}{p^n}\right)\left(\left(p^{n-1}(p-1)-k\right)-p^{n-1}(p-2)\right).$$
                        In conclusion, $L_{p,n}$ is the segment of the Newton polygon $\Newt{\Phi^{(p,n)}}$ with maximal slope $\fraks_{p,n}=\frac{1}{p^{n-2}(p-1)}-\frac{1}{p^n}$. Therefore, we have
                        $$\frakA_{p,n}(T)=-\zetax^{-1}T^{p^{n-1}}-1,$$
                        which has $\frakz_{p,n}=(-1)^n\zetax$ as a root with multiplicity $\frakq_{p,n}=p^{n-1}$.
                        \par Now let $i\geq p+1$. Suppose, for all $2\leq l\leq i-1$, the theorem holds. i.e. we have $\zeta_{p^n}^{(i-1)}=\Lambda_{i-1,n}$ and $\frakq_{i-1,n}=p$. Similar to the previous case, by induction we may assume $\Newt{\Phi^{(i,n)}}$ and $\Newt{\Phi^{(1,n)}}$ are identical in the range $[0, p^{n-1}(p-2)]$. Therefore, we are reduced to consider $\Newt{\Phi^{(i,n)}}$ in the range $[p^{n-1}(p-2), p^{n-2}(p-1)]$.

                        \par By the induction hypothesis $\zeta_{p^n}^{(i-1)}=\Lambda_{i-1,n}$, we get
                        \begin{gather*}
                            b_{p^{n-1}(p-1)-k}^{(i,n)}=
                            \begin{cases}
                                \sum_{l=0}^{p-1}\Lambda_{i-1,n}^{lp^{n-1}}=\frac{\Lambda_{i-1,n}^{p^n}-1}{\Lambda_{i-1,n}^{p^{n-1}}-1},                                                                                                                & \text{ if } k=0;                 \\
                                \frac{(-1)^{k-1}p^{n-1}}{k\Lambda_{i-1,n}^k}\left(\frac{p\Lambda_{i-1,n}^{p^n}}{\Lambda_{i-1,n}^{p^{n-1}}-1}-\Lambda_{i-1,n}^{p^{n-1}}\frac{\Lambda_{i-1,n}^{p^{n}}-1}{(\Lambda_{i-1,n}^{p^{n-1}}-1)^2}\right)+O(p^n), & \text{ if } 1\leq k\leq p^{n-1}.
                            \end{cases}
                        \end{gather*}
                        Again using the estimation of the $p$-adic valuation of $\Lambda_{i-1,n}^{p^n}-1$ and $\Lambda_{i-1,n}^{p^{n-1}}-1$ in \Cref{Estimation}, we have
                        \begin{enumerate}
                            \item $$b_{p^{n-1}(p-1)}^{(i,n)}=\frac{\zetax p^{2+\frac{1}{p-1}-\frac{1}{p^{i-p+1}}}+O\left(p^{2+\frac{1}{p-1}}\right)}{-\zetax p^{\frac{1}{p-1}}+O\left(p^{\frac{2}{p-1}}\right)}=-p^{2-\frac{1}{p^{i-p+1}}}+o\left(p^{2-\frac{1}{p^{i-p+1}}}\right) ,$$
                            \item
                                  \begin{gather*}
                                      v_p\left(b_{p^{n-1}(p-1)-k}^{(i,n)}\right)=n-v_p(k)-v_p\left(\Lambda_{p-1,n}^{p^{n-1}}-1\right)=n-v_p(k)-\frac{1}{p-1} \text{, for } k=1,\cdots,p^{n-1}-1 ,
                                  \end{gather*}
                            \item $$b_{p^{n-1}(p-2)}^{(i,n)}=(1+o(1))\frac{(-1)^{p^{n-1}-1}p}{-\zetax p^{\frac{1}{p-1}}+O\left(p^{\frac{2}{p-1}}\right)}=-\zetax^{-1}p^{\frac{p-2}{p-1}}+o\left(p^{\frac{p-2}{p-1}}\right) .$$
                        \end{enumerate}
                        In other words, the valuation of $b_{p^{n-1}(p-1)-k}^{(i,n)}$ is given by
                        $$v_p\left(b_{p^{n-1}(p-1)-k}^{(i,n)}\right)=
                            \begin{cases}
                                2-\frac{1}{p^{i-p+1}}   & \text{ if } k=0;             \\
                                n-v_p(k)-\frac{1}{p-1}, & \text{ if } 1\leq k<p^{n-1}; \\
                                \frac{p-2}{p-1},        & \text{ if }k=p^{n-1},
                            \end{cases}
                        $$
                        and the coefficient of $b_{p^{n-1}(p-1)}^{(i,n)}$ at $2-\frac{1}{p^{i-p+1}}$ equals to $-1$, the coefficient of $b_{p^{n-1}(p-1)}^{(i,n)}$ at $\frac{p-2}{p-1}$ equals to $-\zetax^{-1}$. Notice that the segment $L_{i,n}$ with endpoints
                        $$\left(p^{n-1}(p-2),v_p\left(b_{p^{n-1}(p-2)}^{(i,n)}\right)\right)=\left(p^{n-1}(p-2),\frac{p-2}{p-1}\right)$$
                        and
                        $$\left(p^{n-1}(p-1),v_p\left(b_{p^{n-1}(p-1)}^{(i,n)}\right)\right)=\left(p^{n-1}(p-1),2-\frac{1}{p^{i-p+1}}\right)$$
                        has slope
                        $$\frac{1}{p^{n-1}}\left(2-\frac{1}{p^{i-p+1}}-\frac{p-2}{p-1}\right)=\frac{1}{p^{n-2}(p-1)}-\frac{1}{p^{i-p+n}} $$
                        and,  for all $k\in\{1,2,\cdots,p^{n-1}-1\}$,
                        $$n-v_p(k)-\frac{1}{p-1}\geq \frac{p-2}{p-1}+\left(\frac{1}{p^{n-2}(p-1)}-\frac{1}{p^{i-p+n}}\right)\left(\left(p^{n-1}(p-1)-k\right)-p^{n-1}(p-2)\right).$$
                        We conclude that $L_{i,n}$ is the segment of $\Newt{\Phi^{(i,n)}}$ with maximal slope $$\fraks_{i,n}=\frac{1}{p^{n-2}(p-1)}-\frac{1}{p^{i-p+n}} .$$ Therefore, we have
                        $$\frakA_{i,n}(T)=-\zetax^{-1}T^{p^{n-1}}-1,$$
                        which has $\frakz_{i,n}=(-1)^n\zetax$ as a root with multiplicity $\frakq_{i,n}=p^{n-1}$.
                        \begin{figure}[H]
                            \centering
                            \begin{tikzpicture}[every node/.style={scale=0.7},scale=1.5]
                                \draw [<->] (0,4) -- (0,0) -- (5,0);
                                \draw [thick] (0,0) -- (3,1) -- (4.5,3.2);
                                \draw [dotted] (3,0) -- (3,1);
                                \draw [dotted] (4.5,0) -- (4.5,3.2);
                                \draw [dotted] (0,1) -- (3,1);
                                \draw [dotted] (0,3.2) -- (4.5,3.2);
                                \draw [dotted] (3.3,0) -- (3.3,3.25);
                                \draw [dotted] (3.6,0) -- (3.6,4);
                                \draw [dotted] (3.9,0) -- (3.9,3);
                                \draw [dotted] (4.2,0) -- (4.2,3.5);
                                \node [below] at (0,0) {$0$};
                                \node [below] at (3,0) {$p^{n-1}(p-2)$};
                                \node [below] at (4.5,0) {$p^{n-1}(p-1)$};
                                \node [left] at (0,1) {$\frac{p-2}{p-1}$};
                                \node [left] at (0,3.2) {$2-\frac{1}{p^{i-p+1}}$};
                                \node [circle,fill,inner sep=1pt] at (3.3,3.25) {};
                                \node [circle,fill,inner sep=1pt] at (3.6,4) {};
                                \node [circle,fill,inner sep=1pt] at (3.9,3) {};
                                \node [circle,fill,inner sep=1pt] at (4.2,3.5) {};
                            \end{tikzpicture}
                            \caption{$\Newt{\Phi^{(i,n)}}$, $i\geq p$}
                        \end{figure}

              \end{enumerate}
        \item The proof of the first assertion shows that $\aleph_0\left(\zeta_{p^n}\right)$ is completely determined once we have fixed our choice of $\frakz_{1,n}$. As we mentioned in \Cref{rmk:44996}, there are $p-1$ different candidates for $\frakz_{1,n}$: $(-1)^n\zetax^{2k+1}, k=0,1,\cdots,p-2$. Therefore there exist $p-1$ different elements $m_0,\cdots,m_{p-2}\in\left(\bbZ/p^n\bbZ\right)^\times$ such that $$\aleph_0\left(\zeta_{p^n}^{m_k}\right)=\sum_{i=0}^{p-1}\frac{\left((-1)^n\zetax^{2k+1}\right)^i}{[i!]} p^{\frac{i}{p^{n-1}(p-1)}}+\sum_{j=n}^\infty (-1)^n\zetax^{2k+1} p^{\frac{1}{p^{n-2}(p-1)}-\frac{1}{p^j}}.$$
              If there exist two different elements $m_{t_1},m_{t_2}$ in $\calR_n$ such that $m_{t_1}\equiv m_{t_2} \pmod{p}$, then by abuse of notations, we assume $m_{t_1},m_{t_2}\in\bbZ$ and $m_{t_1}=ph+m_{t_2}$, where $h$ is a positive integer. Then we have $\zeta_{p^n}^{m_{t_1}}=\zeta_{p^n}^{m_{t_2}}\cdot \left(\zeta_{p^n}^p\right)^h$, where $\left(\zeta_{p^n}^p\right)^h$ is a $p^{n-1}$-th root of unity. Set $\left(\zeta_{p^n}^p\right)^h=\zeta_{p^{n-1}}^r$ with $r$ a positive integer. By \Cref{exp_first_prop} and \Cref{exp_first_rmk}, we have $\zeta_{p^{n-1}}=1+O\left(p^{\frac{1}{p^{n-2}(p-1)}}\right)$ for all $n\geq 2$. Therefore
              $$\zeta_{p^{n-1}}^r=\left(1+O\left(p^{\frac{1}{p^{n-2}(p-1)}}\right)\right)^r=1+O\left(p^{\frac{1}{p^{n-2}(p-1)}}\right),$$
              and consequently
              $$\zeta_{p^n}^{m_{t_1}}=\zeta_{p^n}^{m_{t_2}}\cdot \left(1+O\left(p^{\frac{1}{p^{n-2}(p-1)}}\right)\right)=\zeta_{p^n}^{m_{t_2}}+O\left(p^{\frac{1}{p^{n-2}(p-1)}}\right),$$
              i.e. $\aleph_0\left(\zeta_{p^n}^{m_{t_1}}\right)=\aleph_0\left(\zeta_{p^n}^{m_{t_2}}\right)$, which contradicts our assumption. Now the result follows from the fact that there are exactly $p-1$ $\mathrm{mod}\ p$ residue classes in $\left(\bbZ/p^n\bbZ\right)^\times$.
        \item Similar to the proof of the second assertion, we can prove that for $\tilde{m}\in\left(\bbZ/p^n\bbZ\right)^\times$, $\aleph_0\left(\zeta_{p^n}^m\right)=\aleph_0\left(\zeta_{p^n}^{\tilde{m}}\right)$ is equivalent to $m\equiv \tilde{m}\pmod{p}$. By the second assertion, we can find such $\tilde{m}$ in $\calR_n$.
    \end{enumerate}
\end{proof}

Instead of using the Newton algorithm directly, we explore the canonical expansion of $\zeta_p$ in $\bbL_p$ by using the expansion of $\zeta_{p^2}$:
\begin{proposition}\label{prop:1238}
    The canonical expansion of $\zeta_p$ is given as following: $\zeta_{p}=\sum_{k=0}^\infty [c_k]p^{\frac{k}{p-1}}$, with $c_k\in \bbF_{p^2}$ for all $k\in\bbZ_{\geq 0}$. In particular, for $0\leq k\leq p-1$, we have $c_k=(-1)^k\frac{\zetax^k}{k!}$.
\end{proposition}
\begin{proof}
    The first assertion, as a direct consequence of \Cref{lem:tame_iso}, is proved by Lampert (cf. \cite{Lampert1986}).
    Since $\zeta_{p^2}^p$ is a primitive $p$-th root, we may assume $\zeta_{p^2}^p=\zeta_p^r$ for some $r\in\{1,2,\cdots,p-1\}$. On the one hand, we calculate
    \begin{equation}\label{eq:50544}
        \begin{aligned}
            \left(\zeta_{p^2}\right)^p= & \left(\sum_{k=0}^{p-1}\frac{\zetax^k}{[k!]}p^{\frac{1}{p(p-1)}}+O\left(p^{\frac{1}{p-1}-\frac{1}{p^2}}\right)\right)^p \\
            =                           & \sum_{k=0}^{p-1}\left(\frac{\zetax^k}{[k!]}p^{\frac{1}{p(p-1)}}\right)^p+o(p)                                          \\
            =                           & \sum_{k=0}^{p-1}(-1)^k\frac{\zetax^k}{[k!]}p^{\frac{k}{p-1}}+o(p) .
        \end{aligned}
    \end{equation}
    On the other hand, since $\zeta_p=1-\zetax p^{\frac{1}{p-1}}+o\left(p^{\frac{1}{p-1}}\right)$ (cf. \Cref{exp_first_rmk} of local cite), we have
    \begin{equation}\label{eq:56576}
        \zeta_p^r=\left(1-\zetax p^{\frac{1}{p-1}}+o\left(p^{\frac{1}{p-1}}\right)\right)^r=1-[r]\zetax p^{\frac{1}{p-1}}+o\left(p^{\frac{1}{p-1}}\right) .
    \end{equation}
    By comparing \Cref{eq:50544} and \Cref{eq:56576}, we know that $r=1$ and consequently
    $$\zeta_p=\zeta_{p^2}^p=\sum_{k=0}^{p-1}\left[(-1)^k\frac{\zetax^k}{k!}\right]p^{\frac{k}{p-1}}+o(p) .$$
\end{proof}

\begin{lemma}\label{lem:tame_iso}
    Let $p\geq 3$ be a prime number. Then we have $\bbQ_p\left(\zeta_p\right)=\bbQ_p\left(\zetax p^{\frac{1}{p-1}}\right)$.
\end{lemma}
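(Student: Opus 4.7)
The plan is to show that both $\bbQ_p(\zeta_p)$ and $\bbQ_p(\zetax p^{1/(p-1)})$ are totally ramified of degree $p-1$ over $\bbQ_p$, and that $\bbQ_p(\zeta_p)$ contains a $(p-1)$-th root of $-p$. This forces the containment $\bbQ_p(\zetax p^{1/(p-1)}) \subseteq \bbQ_p(\zeta_p)$, and a degree count closes the argument.

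First I would set $\alpha \deftobe \zetax p^{1/(p-1)}$ and compute $\alpha^{p-1} = \zetax^{p-1}\, p$. Since $\zetax = \zeta_{2(p-1)}$ has order exactly $2(p-1)$, the element $\zetax^{p-1}$ is the unique primitive square root of unity, namely $-1$. Hence $\alpha^{p-1} = -p$, so $\alpha$ is a root of the Eisenstein polynomial $X^{p-1}+p$ and $[\bbQ_p(\alpha):\bbQ_p] = p-1$.

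Next I let $\pi \deftobe 1-\zeta_p$, a uniformizer of $\bbQ_p(\zeta_p)/\bbQ_p$. Evaluating $\Phi_p(X) = \prod_{k=1}^{p-1}(X-\zeta_p^k)$ at $X=1$ gives
\[ p = \prod_{k=1}^{p-1}(1-\zeta_p^k) = u\,\pi^{p-1}, \qquad u \deftobe \prod_{k=1}^{p-1}\frac{1-\zeta_p^k}{1-\zeta_p} \in \cO_{\bbQ_p(\zeta_p)}^{*}. \]
Each factor $\frac{1-\zeta_p^k}{1-\zeta_p} = 1+\zeta_p+\cdots+\zeta_p^{k-1}$ reduces to $k$ modulo $\pi$, so Wilson's theorem yields $u \equiv (p-1)! \equiv -1 \pmod{\pi}$, i.e.\ $-p = (-u)\pi^{p-1}$ with $-u \equiv 1 \pmod{\pi}$.

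Then I would apply Hensel's lemma to $f(X) = X^{p-1}-(-u)$ at $X=1$: since $v_p(f(1)) = v_p(1+u) > 0$ whereas $v_p(f'(1)) = v_p(p-1) = 0$, there exists $w \in \cO_{\bbQ_p(\zeta_p)}^{*}$ with $w^{p-1} = -u$, whence $(w\pi)^{p-1} = -p$. Since $\mu_{p-1} \subset \bbQ_p$, adjoining any one $(p-1)$-th root of $-p$ produces the same extension, so $\bbQ_p(\alpha) = \bbQ_p(w\pi) \subseteq \bbQ_p(\zeta_p)$, and the degree equality established above forces the desired identity. The main subtlety is the Wilson step, which converts $p = u\pi^{p-1}$ into $-p = (-u)\pi^{p-1}$ with $-u \equiv 1 \pmod{\pi}$; this sign is what guarantees Hensel extracts a $(p-1)$-th root of $-p$ (matching the chosen uniformizer $\zetax p^{1/(p-1)}$) rather than of $p$, which would generate a distinct tamely ramified extension.
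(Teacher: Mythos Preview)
Your proof is correct and follows the same overall strategy as the paper: both show that $\bbQ_p(\zeta_p)$ contains a $(p-1)$-th root of $-p$, then conclude by a degree count. The difference lies in how the key congruence $\frac{(1-\zeta_p)^{p-1}}{-p}\equiv 1\pmod{\frakm}$ is obtained. The paper invokes the first term of the $\bbL_p$-expansion $\zeta_p=1-\zetax p^{1/(p-1)}+o\bigl(p^{1/(p-1)}\bigr)$ (from \Cref{exp_first_rmk}) and then extracts the root via the binomial series $(1+M)^{1/(p-1)}$; you instead compute $u=\prod_k\frac{1-\zeta_p^k}{1-\zeta_p}\equiv(p-1)!\equiv -1$ by Wilson's theorem and then apply Hensel. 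Your route is more elementary and self-contained---it does not rely on any of the Mal'cev--Neumann machinery developed in the paper---while the paper's route is natural in context since the expansion is already available. The two root-extraction devices (binomial series versus Hensel) are of course equivalent here, since $p-1$ is a unit.
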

\begin{proof}
    Since $\bbQ_p\left(\zetax p^{\frac{1}{p-1}}\right)$ and $\bbQ_p\left(\zeta_p\right)$ have the same degree over $\bbQ_p$, we only need to show $\bbQ_p\left(\zetax p^{\frac{1}{p-1}}\right)\subseteq\bbQ_p\left(\zeta_p\right)$. It is enough to show $x^{p-1}=\frac{\left(\zeta_p-1\right)^{p-1}}{-p}$ has a solution in $\bbQ_p\left(\zeta_p\right)$.
    \par By \Cref{exp_first_rmk}, we have
    \begin{equation*}
        \frac{\left(\zeta_p-1\right)^{p-1}}{-p}=-p^{-1}\left(1-\zetax p^{\frac{1}{p-1}}+o\left(p^{\frac{1}{p-1}}\right)-1\right)^{p-1}=1+o\left(p^0\right) ;
    \end{equation*}
    thus we may set $\frac{\left(\zeta_p-1\right)^{p-1}}{-p}=1+M$, where $M$ is in the maximal ideal of $\bbQ_p\left(\zeta_p\right)$.
    Since $\binom{\frac{1}{p-1}}{k}\in\bbZ_p$, the binomial series
    $(1+M)^{\frac{1}{p-1}}=\sum_{k=0}^\infty \binom{\frac{1}{p-1}}{k}M^k$ converges in $\bbQ_p\left(\zeta_p\right)$.
\end{proof}

Since we apply the transfinite Newton algorithm for every $n\geq 2$ independently and the first $\aleph_0$ terms of the expansion is determined by $\frakz_{1,n}$, if we take the same $\zetax$ for every $\frakz_{1,n}=(-1)^n\zetax, n=1,3,\cdots$, the following result should be noticed:
\begin{corollary}\label{coro:54977}
    For every $n\geq 1$, we have $\aleph_0\left(\zeta_{p^n}\right)=\aleph_0\left(\zeta_{p^{n+1}}^p\right)$.
\end{corollary}
\begin{proof}
    Since $\zeta_{p^n}$ and $\zeta_{p^{n+1}}^p$ are both $p^n$-th primitive roots of unity, by \Cref{maintheorem1}, we only need to check that
    $$C_{\frac{1}{p^{n-1}(p-1)}}\left(\zeta_{p^{n+1}}^p\right)=\frakz_{1,n}=(-1)^n\zetax.$$
    One calculates
    \begin{align*}
        \zeta_{p^{n+1}}^p= & \left(1+(-1)^{n-1}\zetax p^{\frac{1}{p^n(p-1)}}+O\left((-1)^{n-1}\zetax p^{\frac{2}{p^n(p-1)}}\right)\right)^p                     \\
        =                  & 1+\left((-1)^{n-1}\zetax p^{\frac{1}{p^n(p-1)}}\right)^p+\left(O\left((-1)^{n-1}\zetax p^{\frac{2}{p^n(p-1)}}\right)\right)^p+O(p) \\
        =                  & 1+(-1)^n\zetax p^{\frac{1}{p^{n-1}(p-1)}}+O\left(p^{\frac{2}{p^{n-1}(p-1)}}\right),
    \end{align*}
    and consequently $C_{\frac{1}{p^{n-1}(p-1)}}\left(\zeta_{p^{n+1}}^p\right)=(-1)^n\zetax$, as expected.
\end{proof}

\subsection{Bell polynomials and Stirling numbers of the second kind}\label{Bellpoly}
In this paragraph, we introduce the notion of incomplete exponential Bell polynomials and Stirling numbers of the second kind, whose arithmetic properties will be used to estimate the $p$-adic valuation of  $\Lambda_{i,n}^{p^{n-1}}-1$ and $\Lambda_{i,n}^{p^n}-1$ in \Cref{Estimation}.
\subsubsection*{Generalities}
\par The Bell polynomials are used to study set partitions in combinatorial mathematics. Let $\alpha_l = (j_1, j_2, \cdots, j_l)\in \bbN^{l}$ be a multi-index. We denote its norm by $\vert\alpha_l\vert=j_1 + j_2 + \cdots + j_l$ and its factorial by
$\alpha_l! = \prod_{k=1}^l j_k!$. Let $\bm{x}=(x_1,\cdots, x_l)$ be a $l$-tuple of formal variables. The power of a multi-index $\alpha_l$ of $\bm{x}$ is defined by
$$ \bm{x}^{\alpha_l}:= \prod_{i=1}^lx_i^{j_i}.$$

\begin{definition}
    For integer numbers $n\geq k\geq 0$, the \textbf{incomplete exponential Bell polynomial} with parameter $(n,k)$ is  a polynomial given by
    \begin{gather*}
        \begin{aligned}
            B_{n,k}(x_{1},x_{2},\dots ,x_{n-k+1}):=\sum_{\substack{\alpha_{n-k+1}=(j_1,\cdots,j_{n-k+1})\in\bbN^{n-k+1} \\\vert\alpha_{n-k+1}\vert=k, \sum_{i=1}^{n-k+1}ij_i=n}} \frac{n!}{\alpha_{n-k+1}!}\left(\frac{x_1}{1!},\cdots, \frac{x_{n-k+1}}{(n-k+1)!}\right)^{\alpha_{n-k+1}}.
        \end{aligned}
    \end{gather*}

\end{definition}
With multinomial theorem, the incomplete exponential Bell polynomial can also be defined in terms of its generating function (cf. \cite[P.134 Theorem A]{Comtet1974}):
\begin{equation}\label{lem:bell_gen}
    \frac{1}{k!}\left(\sum_{m\geq 1}x_m\frac{t^m}{m!}\right)^k=\sum_{n\geq k}B_{n,k}(x_1,\cdots,x_{n-k+1})\frac{t^n}{n!},\ k=0,1,2,\cdots .
\end{equation}

From the algebraic point of view, the Bell polynomials can be computed using its generating function. In particular, if $k$ is small or close to $n$, the Bell polynomial $B_{n,k}(x_1,\cdots,x_{n-k+1})$ is easy to compute:
\begin{lemma}\label{lem:bell_calc}
    \begin{itemize}
        \item $B_{n,k}(x_1,\cdots,x_{n-k+1})=\begin{cases}x_n,& \text{ if } k=1; \\\frac{1}{2}\sum_{t=1}^{n-1}\binom{n}{t}x_t x_{n-t}, & \text{ if } k=2 .\end{cases}$
        \item$B_{n,k}(x_1,\cdots,x_{n-k+1})=\begin{cases}(x_1)^n, & \text{ if } k=n;\\ \binom{n}{2}(x_1)^{n-2}x_2 , &\text{ if } k=n-1; \\ \binom{n}{3}(x_1)^{n-3}x_3+3\binom{n}{4}(x_1)^{n-4}(x_2)^2, & \text{ if } k=n-2.\end{cases}
              $
    \end{itemize}
\end{lemma}

The special values of the incomplete exponential Bell polynomial at the points $(1,\cdots, 1)$ and $(\overbrace{1,\cdots,1}^{r},0,\cdots,0),$ called Stirling numbers of the second kind and $r$-restricted Stirling numbers of the second kind (cf. \cite{Komatsu2016,Mezo2014}) respectively. More precisely, we have the following definition.
\begin{definition}\leavevmode
    \begin{enumerate}
        \item For integer numbers $n\geq k\geq 0$, the \textbf{Stirling number of the second kind} is defined by
              $$\stirling{n}{k}= B_{n,k}(1,1,\cdots,1) ;$$
        \item For integer numbers $n\geq k\geq 0$ and positive integer $r$, the \textbf{$r$-restricted Stirling number of the second kind} is defined by
              $$\stirling{n}{k}_{\leq r}=
                  \begin{cases}
                      \stirling{n}{k},                                & \text{ if } n-k+1\leq r; \\
                      B_{n,k}(\overbrace{1,\cdots,1}^{r},0,\cdots,0), & \text{ otherwise.}
                  \end{cases}$$
    \end{enumerate}
\end{definition}

\par Using the generating function formula \Cref{lem:bell_gen} for Bell polynomials, one has:
\begin{lemma}[Generating function]\label{lem:stirling_gen}\leavevmode For $k\in \bbN$, then we have
    \begin{enumerate}
        \item\label{lem:stirling_gen:1} $$\frac{1}{k!}\left(\sum_{m\geq 1}\frac{t^m}{m!}\right)^k=\sum_{n\geq k}\stirling{n}{k}\frac{t^n}{n!} ;$$
        \item\label{lem:stirling_gen:2} $$\frac{1}{k!}\left(\sum_{m= 1}^r\frac{t^m}{m!}\right)^k=\sum_{n= k}^{rk}\stirling{n}{k}_{\leq r}\frac{t^n}{n!} .$$
    \end{enumerate}
\end{lemma}
By comparing \Cref{lem:bell_gen} and the second assertion of \Cref{lem:stirling_gen}, we have:
\begin{corollary}\label{finitesum} If $n\geq rk+1$, then we have
    $\stirling{n}{k}_{\leq r}=0$. Therefore, we can rewrite the second assertion of \Cref{lem:stirling_gen} as
    $$\frac{1}{k!}\left(\sum_{m= 1}^r\frac{t^m}{m!}\right)^k=\sum_{n= k}^\infty\stirling{n}{k}_{\leq r}\frac{t^n}{n!},\ k=0,1,2,\cdots .$$
\end{corollary}
We denote by $(x)_{n}=x(x-1)(x-2)\cdots (x-n+1)$ the falling factorials, which form a basis of the $\bbQ$-vector space $\bbQ[x]$.
The Stirling numbers of the second kind may also be characterized as the coordinate of powers of the indeterminate $x$ with respect to the basis consisting of the falling factorials (cf.  \cite[Page 207 Theorem B]{Comtet1974}) :  If $n>0$, one has
\begin{equation}\label{lem:stirling_falling}
    x^n=\sum_{m=0}^n\stirling{n}{m} (x)_m .
\end{equation}
\begin{corollary}\label{coro:stirling_factorial}
    \begin{equation*}
        \sum_{k=1}^n(-1)^{k-1}(k-1)!\stirling{n}{k}=
        \begin{cases}
            0, & n\geq 2; \\
            1, & n=1.
        \end{cases}
    \end{equation*}
\end{corollary}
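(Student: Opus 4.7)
The plan is to derive this identity directly from the polynomial identity $x^n=\sum_{m=0}^n\stirling{n}{m}(x)_m$ provided by \Cref{lem:stirling_falling}, by specializing at $x=0$ after first dividing by $x$.

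First, I would observe that for $n\geq 1$ the $m=0$ term vanishes (a nonempty set cannot be partitioned into zero blocks, so $\stirling{n}{0}=0$), and every remaining falling factorial $(x)_m$ with $m\geq 1$ is divisible by $x$ as a polynomial, namely $(x)_m=x\cdot(x-1)(x-2)\cdots(x-m+1)$. Dividing both sides of the identity of \Cref{lem:stirling_falling} by $x$ therefore yields
$$x^{n-1}=\sum_{m=1}^n\stirling{n}{m}(x-1)(x-2)\cdots(x-m+1)$$
as an equality in $\bbQ[x]$.

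The next step is to evaluate at $x=0$. The product $(0-1)(0-2)\cdots(0-(m-1))$ is precisely $(-1)^{m-1}(m-1)!$ (with the convention that it equals $1$ when $m=1$), so the right-hand side becomes $\sum_{m=1}^n(-1)^{m-1}(m-1)!\stirling{n}{m}$, which is exactly the sum in the corollary. The left-hand side equals $0^{n-1}$, which is $1$ for $n=1$ and $0$ for $n\geq 2$. This matches the two cases of the claim.

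There is essentially no obstacle: the whole argument is a two-line manipulation of \Cref{lem:stirling_falling} once one notes the divisibility of $(x)_m$ by $x$. As an alternative route, one could use the exponential generating function provided by the first part of \Cref{lem:stirling_gen}: interchanging the order of summation turns the exponential generating function of the claimed sum into $\sum_{k\geq 1}\tfrac{(-1)^{k-1}}{k}(e^t-1)^k=\log(1+(e^t-1))=t$, and comparing coefficients of $t^n/n!$ yields the same conclusion.
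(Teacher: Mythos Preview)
Your proof is correct and follows essentially the same route as the paper: both use \Cref{lem:stirling_falling}, strip off the factor $x$ from each $(x)_m$ (the paper does this via $\binom{x}{k}=\tfrac{x}{k}\binom{x-1}{k-1}$, you do it directly), and then evaluate at $x=0$. Your uniform treatment of the case $n=1$ via $0^{\,n-1}$ is slightly cleaner than the paper's separate check, and the generating-function alternative you sketch is also valid.
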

\begin{proof}
    When $n=1$, the assertion follows from direct calculation. \par When $n\geq 2$, since $\binom{x}{k}=\binom{x-1}{k-1}\frac{x}{k}$ and $\stirling{n}{0}=0$, by \Cref{lem:stirling_falling} we know that
    $$\sum_{k=1}^n\stirling{n}{k}\binom{x-1}{k-1}(k-1)! =x^{n-1} .$$
    By setting $x=0$, we have
    $$\sum_{k=1}^n\stirling{n}{k}\binom{-1}{k-1}(k-1)! =0 ,$$
    where $\binom{-1}{k-1}=(-1)^{k-1}$.
\end{proof}
\subsubsection*{Arithmetic properties}
Now we establish several lemmas related to the arithmetic properties of (restricted) Stirling numbers of the second kind. The first lemma (cf. \Cref{babbage})  summarizes several well-known facts about the arithmetic properties of binomial coefficients, and the other lemmas (cf. \Cref{lem:stirling_arith1}, \Cref{lem:stirling_arith2}, \Cref{vrai} and \Cref{Jambon}) characterize the $\bmod\ p$ congruence properties of some special (restricted) Stirling numbers of the second kind, which will be used in \Cref{boringdog}, \Cref{preliminaire2}, \Cref{superluckydog} and \Cref{seconde}.

\begin{lemma}\label{babbage}Let $p\geq 3$ be a prime number and  $a,b\in\bbN$ be two natural numbers such that $a\geq b$. If $n$ is an integer satisfying $1\leq n\leq p-1$ and  $k$ is a positive integer, then we have
    \begin{enumerate}
        \item $v_p\left(\binom{p^n}{a}\right)=n-v_p(a)$;
        \item
              $\binom{pk}{n}\equiv pk\frac{(-1)^{n-1}}{n}\mod p^2$;

        \item $\binom{ap}{bp}\equiv \binom{a}{b}\mod p^2$.
    \end{enumerate}
\end{lemma}
\begin{proof}
    The first and the second assertions are well-known. The third assertion can be found in \cite[Theorem 1.6]{Grinberg2018}.
\end{proof}

\begin{lemma}\label{lem:stirling_arith1}
    Let $p$ be an odd prime number. For an integer $k$ that $1\leq k\leq p$, one has
    \begin{equation*}
        \stirling{p-1+k}{p}\equiv
        \begin{cases}
            1, & \text{ if } k=1 \text{ or } p; \\
            0, & \text{ otherwise}
        \end{cases}
        \pmod{p} .
    \end{equation*}
\end{lemma}
\begin{proof}
    By \cite[Theorem 5.2]{Stirling2010}, we have
    \begin{equation*}
        \stirling{n}{ap^m}\equiv
        \begin{cases}
            \binom{\frac{n-ap^{m-1}}{p-1}-1}{\frac{n-ap^m}{p-1}}, & \text{ if } n\equiv a\pmod{p-1}, \\
            0,                                                    & \text{ otherwise.}
        \end{cases}
        \pmod{p^m}
    \end{equation*}
    for positive integers $n,a,m$ that $m\geq 1$, $a>0$ and $n\geq ap^m$.
    The assertion follows by taking $n=p-1+k$ and $a=m=1$ in the above formula.
\end{proof}

\begin{lemma}\label{lem:stirling_arith2}
    Let $p$ be an odd prime number and $r$ an integer number satisfying $1\leq r<p-1$, then one has
    $$\stirling{r+p}{p}_{\leq r}=B_{r+p,p}(1,\cdots,1,0)\equiv 0 \mod{p} .$$
\end{lemma}
\begin{proof}
    If $r=1$, then $p+1\geq 1\cdot p+1$ and the result follows from \Cref{finitesum}.

    Now we suppose $r\geq 2$. By \cite[(1.3)]{Cvijovic2011}, one has the following identity:
    $$B_{n,k}(x_1,\cdots,x_{n-k+1})=\frac{1}{x_1}\cdot\frac{1}{n-k}\sum_{\alpha=1}^{n-k}\binom{n}{\alpha}\left((k+1)-\frac{n+1}{\alpha+1}\right)x_{\alpha+1}B_{n-\alpha,k}(x_1,\cdots,x_{n-\alpha-k+1}) .$$
    Let $n=r+p$, $k=p$ and
    $
        a_t=
        \begin{cases}
            1, & \text{ if } t\leq r; \\
            0, & \text{ if } t>r.
        \end{cases}
    $ Then, one has
    \begin{align*}
        \stirling{r+p}{p}_{\leq r}= & B_{r+p,p}(a_1,\cdots,a_{r+1})                                                                                                                     \\
        =                           & \frac{1}{r}\sum_{\alpha=1}^{r}\binom{r+p}{\alpha}\left((p+1)-\frac{r+p+1}{\alpha+1}\right)x_{\alpha+1}B_{r+p-\alpha,p}(a_1,\cdots,a_{r-\alpha+1}) \\
        =                           & \frac{1}{r}\sum_{\alpha=1}^{r-1}\binom{r+p}{\alpha}\left((p+1)-\frac{r+p+1}{\alpha+1}\right)\stirling{r+p-\alpha}{p} .
    \end{align*}
    Since $\alpha+1\leq r<p-1$ and $1<r-\alpha+1<p-1$, by \Cref{lem:stirling_arith1}, we have
    $$\stirling{r+p-\alpha}{p}\equiv 0 \pmod{p}.$$
    As a consequence, we have
    $$\stirling{r+p}{p}_{\leq r}\equiv 0 \pmod{p} .$$
\end{proof}

\begin{lemma}\label{vrai}Let $i$ be an integer that $1\leq i\leq p-1$ and $k\in \bbZ_{>0}$. Then for any integer $l\geq k$, we have
    $$v_p\left(\frac{k!}{l!}\stirling{l}{k}_{\leq i}\right)\geq 0.$$
\end{lemma}
\begin{proof}  For $j\in \bbN_{>0}$, we set $\delta_j=
        \begin{cases}
            1, & \text{ if } j\leq i; \\
            0, & \text{ otherwise.}
        \end{cases}$
    Recall that the incomplete exponential Bell polynomial is defined as follows:
    \begin{align*}
        B_{l,k}(x_{1},x_{2},\dots ,x_{l-k+1})=\sum_{\substack{\alpha_{l-k+1}=(j_1,\cdots,j_{l-k+1})\in\bbN^{l-k+1} \\\vert\alpha_{l-k+1}\vert=k, \sum_{t=1}^{l-k+1}tj_t=l}} \frac{l!}{\alpha_{l-k+1}!}\left(\frac{x_1}{1!},\cdots, \frac{x_{l-k+1}}{(l-k+1)!}\right)^{\alpha_{l-k+1}},
    \end{align*}
    and the $i$-restricted Stirling numbers of the second kind $\stirling{l}{k}_{\leq i}$ is the special value of $B_{l,k}$ at the point  $\delta=(\delta_j)_{1\leq j\leq l-k+1}$. For $\alpha=\left(j_1,\cdots,j_{l-k+1}\right)\in\bbN^{l-k+1}$, we set
    $$F_{l,k,i}(\alpha)=\binom{k}{j_1,\cdots,j_{l-k+1}}\left(\frac{\delta_1}{1!},\cdots, \frac{\delta_{l-k+1}}{(l-k+1)!}\right)^{\alpha} .$$
    Then we have
    $$\frac{k!}{l!}\stirling{l}{k}_{\leq i}=\sum_{\substack{\alpha=\left(j_1,\cdots,j_{l-k+1}\right)\in\bbN^{l-k+1}\\\vert\alpha\vert=k, \sum_{t=1}^{l-k+1}tj_t=l}}F_{l,k,i}(\alpha) ,$$
    and it enough to prove $v_p\left(F_{l,k,i}(\alpha)\right)\geq 0$ for all $\alpha$ in the above formula, which follows from the following discussions on the range of $i$:
    \begin{enumerate}
        \item  Suppose $l-k+1\leq i < p$. We have $v_p\left(\frac{\delta_m}{m!}\right)=v_p(\delta_m)=0$ for all $1\leq m\leq l-k+1$. Therefore,
              $$v_p\left(F_{l,k,i}(\alpha)\right)=v_p\left(\binom{k}{j_1,\cdots,j_{l-k+1}}\right)-\sum_{m=1}^{l-k+1}j_m v_p(m!)=v_p\left(\binom{k}{j_1,\cdots,j_{l-k+1}}\right) \geq 0 .$$
        \item Suppose $i<l-k+1$. For $\alpha=\left(j_1,\cdots,j_{l-k+1}\right)\in\bbN^{l-k+1}$, if there exists $m$ such that $i<m\leq l-k+1 $ and $j_m>0$, then $F_{l,k,i}(\alpha)=0$. If $j_{i+1}=\cdots=j_{l-k+1}=0$, then
              $$v_p\left(F_{l,k,i}(\alpha)\right)=v_p\left(\binom{k}{j_1,\cdots,j_{l-k+1}}\right)-\sum_{m=1}^i j_m v_p(m!)=v_p\left(\binom{k}{j_1,\cdots,j_{l-k+1}}\right) \geq 0 .$$

    \end{enumerate}
\end{proof}

\begin{lemma}\label{Jambon}
    For $n\in\bbN_{\geq 2}$, $1\leq s\leq p-1$ and $sp^{n-2}\leq t\leq p^{n-1}-1$, we have
    \begin{equation*}
        \frac{(sp^{n-2})!}{t!}\stirling{t}{sp^{n-2}}_{\leq p-1}\equiv \begin{cases}0 \mod p, & \text{ if }p^{n-2}\nmid t \\  \frac{s!}{(t/p^{n-2})!}\stirling{t/p^{n-2}}{s}  \mod p, & \text{ if } p^{n-2}\mid t       .\end{cases}
    \end{equation*}
\end{lemma}
\begin{proof}When $n=2$, the assertion follows from the fact $t-s+1\leq p-1$ and $\stirling{t}{s}_{\leq p-1}=\stirling{t}{s}$.

    Suppose $n\geq 3$. For $1\leq s\leq p-1$ and $sp^{n-2}\leq t$, we set $u_{s,t}= \min\{t-sp^{n-2}+1,p-1\}$. By the definition of restricted Stirling number of the second kind, we have
    $$\frac{(sp^{n-2})!}{t!}\stirling{t}{sp^{n-2}}_{\leq p-1}=\sum_{\substack{\alpha=(j_1,\cdots,j_{u_{s,t}})\in\bbN^{u_{s,t}}\\\vert\alpha\vert=sp^{n-2},\sum_{m=1}^{u_{s,t}}mj_m=t}}\binom{sp^{n-2}}{j_1,\cdots,j_{u_{s,t}}}\left(\frac{1}{1!},\cdots,\frac{1}{u_{s,t}!}\right)^\alpha .$$
    By separating this sum into two parts, we can write
    \begin{align*}
        \frac{(sp^{n-2})!}{t!}\stirling{t}{sp^{n-2}}_{\leq p-1}= & \sum_{\substack{\alpha=(j_1,\cdots,j_{u_{s,t}})\in \left(p^{n-2}\bbN\right)^{u_{s,t}}                        \\\vert\alpha\vert=sp^{n-2},\sum_{m=1}^{u_{s,t}}mj_m=t}}\binom{sp^{n-2}}{j_1,\cdots,j_{u_{s,t}}}\left(\frac{1}{1!},\cdots,\frac{1}{u_{s,t}!}\right)^\alpha\\
        +                                                        & \sum_{\substack{\alpha=(j_1,\cdots,j_{u_{s,t}})\in\bbN^{u_{s,t}}\backslash\left(p^{n-2}\bbN\right)^{u_{s,t}} \\\vert\alpha\vert=sp^{n-2},\sum_{m=1}^{u_{s,t}}mj_m=t}}\binom{sp^{n-2}}{j_1,\cdots,j_{u_{s,t}}}\left(\frac{1}{1!},\cdots,\frac{1}{u_{s,t}!}\right)^\alpha.
    \end{align*}
    If $\alpha=(j_1,\cdots,j_{u_{s,t}})\in\bbN^{u_{s,t}}\backslash\left(p^{n-2}\bbN\right)^{u_{s,t}}$, then, by the facts $\binom{sp^{n-2}}{j_m}$ is a factor of $\binom{sp^{n-2}}{j_1,\cdots,j_{u_{s,t}}}$ for all $1\leq m\leq u_{s,t}$ and $\binom{sp^{n-2}}{j_m}$ is divided by $p$ if $p^{n-2}\nmid j_m$, we have $\binom{sp^{n-2}}{j_1,\cdots,j_{u_{s,t}}}\left(\frac{1}{1!},\cdots,\frac{1}{u_{s,t}!}\right)^\alpha$ is divisible by $p$. Therefore, we have
    \begin{equation}\label{eq:35851}
        \frac{(sp^{n-2})!}{t!}\stirling{t}{sp^{n-2}}_{\leq p-1}=\sum_{\substack{\alpha=(j_1,\cdots,j_{u_{s,t}})\in \left(p^{n-2}\bbN\right)^{u_{s,t}}\\\vert\alpha\vert=sp^{n-2},\sum_{m=1}^{u_{s,t}}mj_m=t}}\binom{sp^{n-2}}{j_1,\cdots,j_{u_{s,t}}}\left(\frac{1}{1!},\cdots,\frac{1}{u_{s,t}!}\right)^\alpha+O(p) .
    \end{equation}
    By replacing $j_m$ with $\widehat{j}_m= j_m/p^{n-2}$ and replacing $\alpha$ with $\widehat{\alpha}= \left(\widehat{j}_1,\cdots,\widehat{j}_{u_{s,t}}\right)$, we can rewrite \Cref{eq:35851} as
    \begin{equation}\label{eq:30859}
        \begin{aligned}
              & \frac{(sp^{n-2})!}{t!}\stirling{t}{sp^{n-2}}_{\leq p-1}                                                   \\
            = & \sum_{\substack{\widehat{\alpha}=\left(\widehat{j}_1,\cdots,\widehat{j}_{u_{s,t}}\right)\in\bbN^{u_{s,t}} \\\vert \widehat{\alpha}\vert=s, \sum_{m=1}^{u_{s,t}}m\widehat{j}_m=t/p^{n-2}}}\binom{sp^{n-2}}{\widehat{j}_1 p^{n-2},\cdots,\widehat{j}_{u_{s,t}} p^{n-2}}\left(\left(\frac{1}{1!}\right)^{p^{n-2}},\cdots,\left(\frac{1}{u_{s,t}!}\right)^{p^{n-2}}\right)^{\widehat{\alpha}}+O(p) .
        \end{aligned}
    \end{equation}
    Notice that we have the identity
    \begin{gather*}
        \binom{sp^{n-2}}{\widehat{j}_1 p^{n-2},\cdots,\widehat{j}_{u_{s,t}} p^{n-2}}                                                                                                                                                                                   = \binom{\widehat{j}_1 p^{n-2}+\cdots+\widehat{j}_{u_{s,t}} p^{n-2}}{\widehat{j}_1 p^{n-2}}\binom{\widehat{j}_2 p^{n-2}+\cdots+\widehat{j}_{u_{s,t}} p^{n-2}}{\widehat{j}_2 p^{n-2}}\cdots\binom{\widehat{j}_{u_{s,t}} p^{n-2}}{\widehat{j}_{u_{s,t}} p^{n-2}} ,
    \end{gather*}
    and by applying the formula $\binom{ap}{bp}\equiv \binom{a}{b}\pmod{p^2}$ (cf. \Cref{babbage}) to this identity, we obtain
    \begin{align*}
        \binom{sp^{n-2}}{\widehat{j}_1 p^{n-2},\cdots,\widehat{j}_{u_{s,t}} p^{n-2}}
        = & \binom{\widehat{j}_1 +\cdots+\widehat{j}_{u_{s,t}} }{\widehat{j}_1 }\binom{\widehat{j}_2 +\cdots+\widehat{j}_{u_{s,t}} }{\widehat{j}_2 }\cdots\binom{\widehat{j}_{u_{s,t}} }{\widehat{j}_{u_{s,t}}}+O(p) \\
        = & \binom{s}{\widehat{j}_1,\cdots,\widehat{j}_{u_{s,t}}}+O(p) .
    \end{align*}
    Additionally, for all $m\in\{1,\cdots,u_{s,t}\}$, we have
    $$\left(\frac{1}{m!}\right)^{\widehat{j}_m p^{n-2}}=\left(\frac{1}{m!}\right)^{\widehat{j}_m}+O(p).$$
    Therefore, we can rewrite \Cref{eq:30859} as
    \begin{equation}
        \frac{(sp^{n-2})!}{t!}\stirling{t}{sp^{n-2}}_{\leq p-1}=\sum_{\substack{\widehat{\alpha}=\left(\widehat{j}_1,\cdots,\widehat{j}_{u_{s,t}}\right)\in\bbN^{u_{s,t}}\\\vert \widehat{\alpha}\vert=s, \sum_{m=1}^{u_{s,t}}m\widehat{j}_m=t/p^{n-2}}}\binom{s}{\widehat{j}_1,\cdots,\widehat{j}_{u_{s,t}}}\left(\frac{1}{1!},\cdots,\frac{1}{u_{s,t}!}\right)^{\widehat{\alpha}}+O(p) .
    \end{equation}
    If $p^{n-2}\nmid t$, then the summation above is void and consequently $v_p\left(\frac{(sp^{n-2})!}{t!}\stirling{t}{sp^{n-2}}_{\leq p-1}\right)\geq 1$.
    \par It remains to deal with the case $p^{n-2}\mid t$. By setting $t=\widehat{t}p^{n-2}$ with $s\leq\widehat{t}\leq p-1$, we have
    \begin{equation*}
        \frac{(sp^{n-2})!}{t!}\stirling{t}{sp^{n-2}}_{\leq p-1}=\frac{(sp^{n-2})!}{(\widehat{t}p^{n-2})!}\stirling{\widehat{t}p^{n-2}}{sp^{n-2}}_{\leq p-1}.
    \end{equation*}
    We conclude our assertion in this case by the following discussion on the relation between $\widehat{t}$ and $s$.
    \begin{enumerate}
        \item If $\widehat{t}=s$, we have $$\frac{(sp^{n-2})!}{(\widehat{t}p^{n-2})!}\stirling{\widehat{t}p^{n-2}}{sp^{n-2}}_{\leq p-1}=1=\frac{s!}{\widehat{t}!}\stirling{\widehat{t}}{s} .$$
        \item If $\widehat{t}>s$, we have $\widehat{t}p^{n-2}-sp^{n-2}+1\geq p^{n-2}+1> p-1$. Therefore, $u_{s,\widehat{t}p^{n-2}}=p-1$ and
              \begin{align*}
                  \frac{(sp^{n-2})!}{(\widehat{t}p^{n-2})!}\stirling{\widehat{t}p^{n-2}}{sp^{n-2}}_{\leq p-1}
                  =  \sum_{\substack{\widehat{\alpha}=\left(\widehat{j}_1,\cdots,\widehat{j}_{p-1}\right)\in\bbN^{p-1} \\\vert \widehat{\alpha}\vert=s, \sum_{m=1}^{p-1}m\widehat{j}_m=\widehat{t}}}\binom{s}{\widehat{j}_1,\cdots,\widehat{j}_{p-1}}\left(\frac{1}{1!},\cdots,\frac{1}{(p-1)!}\right)^{\widehat{\alpha}}+O(p) .
              \end{align*}
              If there exists $p-1\geq r>\widehat{t}-s+1$ such that $j_r\neq 0$, then
              $$1\widehat{j}_1+\cdots+(p-1)\widehat{j}_{p-1}\geq 1\cdot(s-1)+r>\widehat{t} ,$$
              which contradicts to the condition that $\sum_{m=1}^{p-1}m\widehat{j}_m=\widehat{t}$.
              Therefore, $\widehat{j}_r=0$ for all $r>\widehat{t}-s+1$. As a  consequence,
              \begin{align*}
                    & \frac{(sp^{n-2})!}{(\widehat{t}p^{n-2})!}\stirling{\widehat{t}p^{n-2}}{sp^{n-2}}_{\leq p-1}                               \\
                  = & \sum_{\substack{\widehat{\alpha}=\left(\widehat{j}_1,\cdots,\widehat{j}_{\widehat{t}-s+1}\right)\in\bbN^{\widehat{t}-s+1} \\\vert \widehat{\alpha}\vert=s, \sum_{m=1}^{\widehat{t}-s+1}m\widehat{j}_m=\widehat{t}}}\binom{s}{\widehat{j}_1,\cdots,\widehat{j}_{\widehat{t}-s+1}}\left(\frac{1}{1!},\cdots,\frac{1}{(\widehat{t}-s+1)!}\right)^{\widehat{\alpha}}+O(p)\\
                  = & \frac{s!}{\widehat{t}!}\stirling{\widehat{t}}{s}+O(p) .
              \end{align*}
    \end{enumerate}

\end{proof}

\subsubsection*{The main technical propositions}
In this paragraph, we establish our main technical propositions (cf. \Cref{superluckydog}, \Cref{boringdog} and \Cref{preliminaire2}) using the arithmetic properties of (restricted) Stirling numbers of the second kind.

\begin{proposition}\label{superluckydog}For $n\in \bbN_{\geq 2}$, we have
    $$  \left(\sum_{l=0}^{p-1}\frac{(-1)^{ln}}{l!}\zetax^l p^{\frac{l}{p^{n-1}(p-1)}}\right)^{p^{n-1}}-1
        =                \sum_{l=1}^{p-1}\frac{(-1)^l}{[l!]}\zetax^l p^{\frac{l}{p-1}}+\zetax p^{1+\frac{1}{p(p-1)}}+O\left(p^{1+\frac{1}{p-1}}\right).$$
\end{proposition}
\begin{proof}
    Let $\lambda_n=(-1)^n\zetax p^{\frac{1}{p^{n-1}(p-1)}}$, and we rewrite left-hand side of the equality as
    \begin{equation}\label{eq:47220}
        \left(\sum_{l=0}^{p-1}\frac{(-1)^{ln}}{l!}\zetax^l p^{\frac{l}{p^{n-1}(p-1)}}\right)^{p^{n-1}}-1= \left(\sum_{l=1}^{p-1}\frac{\lambda_n^l}{l!}\right)^{p^{n-1}}+ H(n),
    \end{equation}
    where
    $H(n)= \sum_{j=1}^{p^{n-1}-1}\binom{p^{n-1}}{j}\left(\sum_{l=1}^{p-1}\frac{\lambda_n^l}{l!}\right)^j .$

    Note that $v_p\left(\binom{p^{n-1}}{j}\left(\sum_{l=1}^{p-1}\frac{\lambda_n^l}{l!}\right)^j\right)=n-1-v_p(j)+\frac{j}{p^{n-1}(p-1)}$
    and the condition $$n-1-v_p(j)+\frac{j}{p^{n-1}(p-1)}<1+\frac{1}{p-1}$$ implies $v_p(j)=n-2$.
    We can rewrite $H(n)$ as
    \begin{equation}
        \sum_{{s}=1}^{p-1}\binom{p^{n-1}}{{s}p^{n-2}}\left(\sum_{l=1}^{p-1}\frac{\lambda_n^l}{l!}\right)^{{s}p^{n-2}}+O\left(p^{1+\frac{1}{p-1}}\right) .
    \end{equation}
    Using \Cref{babbage}, one can further simplify it as
    $$H(n)=\sum_{s=1}^{p-1}\binom{p}{s}\left(\sum_{l=1}^{p-1}\frac{\lambda_n^l}{l!}\right)^{sp^{n-2}}+O\left(p^{1+\frac{1}{p-1}}\right) .$$

    Applying the generating function formula for restricted Stirling numbers of the second kind, we obtain
    \begin{align*}
        H(n)= & \sum_{s=1}^{p-1}\binom{p}{s}(sp^{n-2})!\sum_{t=sp^{n-2}}^\infty \stirling{t}{sp^{n-2}}_{\leq p-1}\frac{\lambda_n^t}{t!}+O\left(p^{1+\frac{1}{p-1}}\right)               \\
        =     & \sum_{s=1}^{p-1}\binom{p}{s}\sum_{t=sp^{n-2}}^\infty \left(\frac{(sp^{n-2})!}{t!}\stirling{t}{sp^{n-2}}_{\leq p-1}\right)\lambda_n^t+O\left(p^{1+\frac{1}{p-1}}\right).
    \end{align*}
    By \Cref{vrai}, we know that $\frac{(sp^{n-2})!}{t!}\stirling{t}{sp^{n-2}}_{\leq p-1}$ has non-negative valuation. Note that we have $v_p(\lambda_n)=\frac{1}{p^{n-1}(p-1)}$ and
    for $t\geq p^{n-1}$, we have $v_p\left(\lambda_n^t\right)\geq\frac{1}{p-1}$. Thus, we can assemble the terms with $t\geq p^{n-1}$ of $H(n)$ into the error term:
    \begin{equation}\label{eq:36550}
        H(n)=\sum_{s=1}^{p-1}\binom{p}{s}\sum_{t=sp^{n-2}}^{p^{n-1}-1} \left(\frac{(sp^{n-2})!}{t!}\stirling{t}{sp^{n-2}}_{\leq p-1}\right)\lambda_n^t+O\left(p^{1+\frac{1}{p-1}}\right).
    \end{equation}
    We denote by $\widehat{t}=\frac{t}{p^{n-2}}$.  By \Cref{Jambon}, we obtain
    $$H(n) = \sum_{s=1}^{p-1}\binom{p}{s}\sum_{\widehat{t}=s}^{p-1} \left(\frac{s!}{\widehat{t}!}\stirling{\widehat{t}}{s}\right)\lambda_n^{\widehat{t}p^{n-2}}+O\left(p^{1+\frac{1}{p-1}}\right) .$$
    By exchanging the order of the summations and using the second assertion of \Cref{babbage}, we have
    \begin{equation}\label{eq:22082}
        \begin{split}
            H(n)  =  & p\sum_{\widehat{t}=1}^{p-1}\frac{\lambda_n^{\widehat{t}p^{n-2}}}{\widehat{t}!}\sum_{s=1}^{\widehat{t}} (-1)^{s-1}(s-1)!\stirling{\widehat{t}}{s}+O\left(p^{1+\frac{1}{p-1}}\right) \\
            =&p \lambda_n^{p^{n-2}}+O\left(p^{1+\frac{1}{p-1}}\right),
        \end{split}
    \end{equation}
    where the last equality follows from  \Cref{coro:stirling_factorial}.

    For the term $\left(\sum_{l=1}^{p-1}\frac{\lambda_n^l}{l!}\right)^{p^{n-1}}$, by multinomial theorem, one has
    $$\left(\sum_{l=1}^{p-1}\frac{\lambda_n^l}{l!}\right)^{p^{n-1}}=\sum_{\substack{j_1,\cdots,j_{p-1}\in\bbN\\j_1+\cdots+j_{p-1}=p^{n-1}}}\binom{p^{n-1}}{j_1,\cdots,j_{p-1}}\prod_{l=1}^{p-1}\left(\frac{\lambda_n^l}{l!}\right)^{j_l}.$$
    If $j_1,\cdots,j_{p-1}<p^{n-1}$, then we have
    \begin{align*}
        v_p\left(\binom{p^{n-1}}{j_1,\cdots,j_{p-1}}\prod_{l=1}^{p-1}\left(\frac{\lambda_n^l}{l!}\right)^{j_l}\right)= & v_p\left(\binom{p^{n-1}}{j_1,\cdots,j_{p-1}}\right)+\sum_{l=1}^{p-1}lj_l v_p(\lambda_n) \\
        \geq                                                                                                           & 1+\frac{1}{p^{n-1}(p-1)}\sum_{l=1}^{p-1}1\cdot j_l                                      \\
        =                                                                                                              & 1+\frac{1}{p-1}.
    \end{align*}
    If there exists a $l\in\{1,\cdots,p-1\}$ such that $j_l=p^{n-1}$, one calculates
    \begin{align*}
        \left(\frac{\lambda_n^l}{l!}\right)^{p^{n-1}}= & (-1)^l\zetax^l p^{\frac{l}{p-1}}\frac{1}{(l!)^{p^{n-1}}}
        =                                    (-1)^l\zetax^l p^{\frac{l}{p-1}}\left(\frac{1}{[l!]}+O(p)\right)                             \\
        =                                              & \frac{(-1)^l}{[l!]}\zetax^l p^{\frac{l}{p-1}}+O\left(p^{1+\frac{1}{p-1}}\right).
    \end{align*}
    In conclusion, we have
    \begin{align*}
        \left(\sum_{l=1}^{p-1}\frac{\lambda_n^l}{l!}\right)^{p^{n-1}}= & \sum_{l=1}^{p-1}\left(\frac{\lambda_n^l}{l!}\right)^{p^{n-1}}+O\left(p^{1+\frac{1}{p-1}}\right)   \\
        =                                                              & \sum_{l=1}^{p-1}\frac{(-1)^l}{[l!]}\zetax^l p^{\frac{l}{p-1}}+O\left(p^{1+\frac{1}{p-1}}\right) .
    \end{align*}
    Combining with \Cref{eq:47220,eq:22082}, we have
    \begin{align*}
          & \left(\sum_{l=0}^{p-1}\frac{(-1)^{ln}}{l!}\zetax^l p^{\frac{l}{p^{n-1}(p-1)}}\right)^{p^{n-1}}-1                                \\
        = & \left(\sum_{l=1}^{p-1}\frac{\lambda_n^l}{l!}\right)^{p^{n-1}}+ H(n)                                                             \\
        = & \sum_{l=1}^{p-1}\frac{(-1)^l}{[l!]}\zetax^l p^{\frac{l}{p-1}}+\zetax p^{1+\frac{1}{p(p-1)}}+O\left(p^{1+\frac{1}{p-1}}\right) ,
    \end{align*}
    as expected.
\end{proof}

\begin{proposition}\label{boringdog}
    For $n\in\bbN_{\geq 2}$, we have
    $$\left(\sum_{l=0}^{p-1}\frac{(-1)^l}{l!}\zetax^l p^{\frac{l}{p-1}}\right)^p-1=O\left(p^{2+\frac{1}{p-1}}\right) .$$
\end{proposition}
\begin{proof}
    Let $\theta_n=-\zetax p^{\frac{1}{p-1}}$, then by the generate function of the restricted Stirling number of the second kind we have
    \begin{equation}\label{eq:51692}
        \left(\sum_{l=0}^{p-1}\frac{\theta_n^l}{l!}\right)^p-1=\sum_{j=1}^p\binom{p}{j}\left(\sum_{l=1}^{p-1}\frac{\theta_n^l}{l!}\right)^j=\sum_{j=1}^p\binom{p}{j}\sum_{k=j}^\infty \frac{j!}{k!}\stirling{k}{j}_{\leq p-1}\theta_n^k.
    \end{equation}
    Notice that
    $$v_p\left(\binom{p}{j}\left(\frac{j!}{k!}\stirling{k}{j}_{\leq p-1}\right)\theta_n^k\right)\geq 1-v_p(j)+\frac{k}{p-1} ,$$
    by assembling terms with valuation equal or greater than $2+\frac{1}{p-1}$, we can rewrite \Cref{eq:51692} as
    \begin{align*}
        \left(\sum_{l=0}^{p-1}\frac{\theta_n^l}{l!}\right)^p-1=\sum_{j=1}^{p-1}\binom{p}{j}\sum_{k=j}^{p-1} \frac{j!}{k!}\stirling{k}{j}_{\leq p-1}\theta_n^k+\sum_{k=p}^{2p-2}\frac{p!}{k!}\stirling{k}{p}_{\leq p-1}\theta_n^k+O\left(p^{2+\frac{1}{p-1}}\right) .
    \end{align*}
    By the definition of restricted Stirling number of the second kind and changing the order of summations, we can further reduce this to
    \begin{align*}
        \left(\sum_{l=0}^{p-1}\frac{\theta_n^l}{l!}\right)^p-1= & \sum_{k=1}^{p-1}\frac{\theta_n^k}{k!}\sum_{j=1}^k\binom{p}{j}j!\stirling{k}{j}+\sum_{k=p}^{2p-2}\frac{p!}{k!}\stirling{k}{p}\theta_n^k+O\left(p^{2+\frac{1}{p-1}}\right)      \\
        =                                                       & p\sum_{k=1}^{p-1}\frac{\theta_n^k}{k!}\sum_{j=1}^k(-1)^{j-1}(j-1)!\stirling{k}{j}+\sum_{k=p}^{2p-2}\frac{p!}{k!}\stirling{k}{p}\theta_n^k +O\left(p^{2+\frac{1}{p-1}}\right).
    \end{align*}
    By \Cref{coro:stirling_factorial},
    $$p\sum_{k=1}^{p-1}\frac{\theta_n^k}{k!}\sum_{j=1}^k(-1)^{j-1}(j-1)!\stirling{k}{j}=p\theta_n .$$
    On the other hand, since $v_p(p!)=v_p(k!)=1$ for $k=p,\cdots 2p-2$, by \Cref{lem:stirling_arith1} we have
    $$\frac{p!}{k!}\stirling{k}{p}=
        \begin{cases}
            O(p),   & \text{ if }p<k\leq 2p-2 ; \\
            1+O(p), & \text{ if }k=p ,
        \end{cases}
    $$
    and consequently
    $$\left(\sum_{l=0}^{p-1}\frac{\theta_n^l}{l!}\right)^p-1=p\theta_n+\theta_n^p+O\left(p^{2+\frac{1}{p-1}}\right)=O\left(p^{2+\frac{1}{p-1}}\right) .$$
\end{proof}

\begin{proposition}\label{preliminaire2}Let $p$ be a prime and let $1\leq i<p-1$ be an integer. For $1\leq l\leq i+1$ an integer, we set $$G_i(l)=\left(\sum_{k=1}^l(-1)^{k-1}(k-1)!\stirling{l}{k}_{\leq i}\right)+\frac{p \cdot l!}{(l+p-1)!}\stirling{l+p-1}{p}_{\leq i} .$$
    Then we have $G_i(l)=\begin{cases}-1+O(p), &\text{ if } l=i+1; \\ O(p), & \text{ if } l\leq i .\end{cases}$
\end{proposition}
\begin{proof} We rewrite $G_i(l)$ as following: $$G_i(l) =\begin{cases}\sum_{k=1}^l(-1)^{k-1}(k-1)!\stirling{l}{k}+\stirling{p-1+l}{p}\frac{ p\cdot l!}{(l+p-1)!}, & \text{if }l\leq i; \\  \sum_{k=1}^{i+1}(-1)^{k-1}(k-1)!\stirling{i+1}{k}_{\leq i}+\frac{p \cdot (i+1)!}{(i+p)!}\stirling{i+p}{p}_{\leq i},& \text{if }l= i+1.\end{cases}$$
    Recall that, the \Cref{coro:stirling_factorial} says
    \begin{equation*}
        \sum_{k=1}^n(-1)^{k-1}(k-1)!\stirling{n}{k}=
        \begin{cases}
            0, & n\geq 2; \\
            1, & n=1.
        \end{cases}
    \end{equation*}

    \begin{itemize}[leftmargin=*]
        \item Suppose $l\leq i$.                     If $l=1$, then one has
              $$G_i(1)=1+\stirling{p}{p}\frac{1}{(p-1)!}\equiv 0 \mod p.$$
              If $1<l\leq i<p-1$, by \Cref{lem:stirling_arith1} and \Cref{coro:stirling_factorial}, one has
              $$G_i(n)=0+\stirling{p-1+l}{p}\frac{p\cdot l!}{(l+p-1)!}\equiv 0 \mod p.$$

        \item Suppose $l=i+1$, by \Cref{lem:stirling_arith2} and \Cref{coro:stirling_factorial}, one has
              $$G_i(i+1)= \sum_{k=1}^{i+1}(-1)^{k-1}(k-1)!\stirling{i+1}{k}_{\leq i}+\frac{p \cdot (i+1)!}{(i+p)!}\stirling{i+p}{p}_{\leq i}.$$
              For $2\leq k\leq i+1$, one has $\stirling{i+1}{k}_{\leq i}=\stirling{i+1}{k}$, therefore
              \begin{align*}
                  G_i(i+1)= & (-1)^{1-1}(1-1)!\stirling{i+1}{1}_{\leq i} +\sum_{k=2}^{i+1}(-1)^{k-1}(k-1)!\stirling{i+1}{k}  +\stirling{i+p}{p}_{\leq i}\frac{p\cdot (i+1)!}{(i+p)!} \\
                  =         & 0-(-1)^{1-1}(1-1)!\stirling{i+1}{1}+\sum_{k=1}^{i+1}(-1)^{k-1}(k-1)!\stirling{i+1}{k}+O(p)\frac{p(i+1)!}{(i+p)!}                                       \\
                  =         & -1+0+O(p)                                                                                                                                              \\
                  =         & -1+O(p) .
              \end{align*}
    \end{itemize}
\end{proof}

\subsection{Estimation of $\Lambda_{i,n}^{p^{n-1}}-1$ and $\Lambda_{i,n}^{p^n}-1$}\label{Estimation}
Let $n\in\bbN_{\geq 2}$. Recall that we set
$$\Lambda_{i,n}=
    \begin{cases}
        \sum_{k=0}^i \frac{(-1)^{kn}}{[k!]}\zetax^k p^{\frac{k}{p^{n-1}(p-1)}},                   & \text{ for } 0\leq i\leq p-1, \\
        \Lambda_{p-1,n}+ \sum_{l=n}^{i-p+n}(-1)^n\zetax p^{\frac{1}{p^{n-2}(p-1)}-\frac{1}{p^l}}, & \text{ for }  i\geq p.
    \end{cases}$$
As indicated in \Cref{statement}, for $i\in\bbN_{>0}$ and $0\leq k\leq p^{n-1}$, we can describe the coefficients $b_{p^{n-1}(p-1)-k}^{(i,n)}$ of the $i$-approximation polynomial $\Phi^{(i,n)}$ by the following formula
$$b_{p^{n-1}(p-1)-k}^{(i,n)}=\begin{cases}\frac{\Lambda_{i-1,n}^{p^n}-1}{\Lambda_{i-1,n}^{p^{n-1}}-1},                                                                                                                                                            & \text{ if } k=0;                 \\
        \frac{(-1)^{k-1}p^{n-1}}{k\Lambda_{i-1,n}^k}\left(\frac{p\Lambda_{i-1,n}^{p^n}}{\Lambda_{i-1,n}^{p^{n-1}}-1}-\Lambda_{i-1,n}^{p^{n-1}}\frac{\Lambda_{i-1,n}^{p^{n}}-1}{(\Lambda_{i-1,n}^{p^{n-1}}-1)^2}\right)+O(p^n) , & \text{ if } 1\leq k\leq p^{n-1}.\end{cases}$$
This leads us to estimate the $p$-adic valuation of $\Lambda_{i,n}^{p^{n-1}}-1$ and $\Lambda_{i,n}^{p^{n}}-1$ in \Cref{premier} and \Cref{seconde} respectively. In general, we obtain the estimation by induction, but since the formula for $\Lambda_{i,n}$ in the ranges $1\leq i<p-1$ and $ p-1\leq i$ are different, the statements will be separated into two parts.
\begin{proposition}\label{premier}
    Let $n\in \bbN_{\geq 2}$.
    \begin{enumerate}
        \item If $1\leq i< p-1$, we have
              $$\Lambda_{i,n}^{p^{n-1}}-1=\sum_{l=1}^{i}\frac{(-1)^l}{[l!]}\zetax^l p^{\frac{l}{p-1}}+O\left(p^{1+\frac{1}{p(p-1)}}\right).$$

        \item If $p-1\leq i$, we have
              $$\Lambda_{i,n}^{p^{n-1}}-1=\sum_{l=1}^{p-1} \frac{(-1)^l}{[l!]}\zetax^l p^{\frac{l}{p-1}}+\zetax p^{1+\frac{1}{p-1}-\frac{1}{p^{i-p+2}}}+O\left(p^{1+\frac{1}{p-1}}\right).$$
    \end{enumerate}
\end{proposition}
\begin{proof}We prove this lemma by induction on $i$.
    \begin{enumerate}
        \item If $i=1$, then we have
              \begin{align*}
                  \Lambda_{1,n}^{p^{n-1}}-1= & \left(1+(-1)^n\zetax p^{\frac{1}{p^{n-1}(p-1)}}\right)^{p^{n-1}}  -1                                                                                                                  =  \sum_{k=1}^{p^{n-1}}\binom{p^{n-1}}{k}(-1)^{kn}\zetax^k p^{\frac{k}{p^{n-1}(p-1)}} \\
                  =                          & -\zetax p^{\frac{1}{p-1}}+O\left(p^{1+\frac{1}{p(p-1)}}\right) .
              \end{align*}
              Suppose the lemma is true for $j$ with $1\leq j\leq  i-1\leq p-3$. Then, we have
              \begin{gather*}
                  \begin{aligned}
                      \Lambda_{i,n}^{p^{n-1}}-1= & \left(\Lambda_{i-1,n}+\frac{(-1)^{in}}{[i!]}\zetax^i p^{\frac{i}{p^{n-1}(p-1)}}\right)^{p^{n-1}}-1                                                                                                                                         \\
                      =                          & \Lambda_{i-1,n}^{p^{n-1}}-1+\sum_{k=1}^{p^{n-1}-1}\binom{p^{n-1}}{k}\Lambda_{i-1,n}^{p^{n-1}-k}\frac{(-1)^{ikn}}{[i!]^k}\zetax^{ik} p^{\frac{ik}{p^{n-1}(p-1)}}+\frac{(-1)^{inp^{n-1}}}{[i!]^{p^{n-1}}}\zetax^{ip^{n-1}} p^{\frac{i}{p-1}} \\
                      =                          & \Lambda_{i-1,n}^{p^{n-1}}-1+\frac{(-1)^i}{[i!]}\zetax^{i} p^{\frac{i}{p-1}}+O\left(p^{1+\frac{i}{p-1}}\right).
                  \end{aligned}
              \end{gather*}
              Therefore, the induction hypothesis allows us to conclude this case.

        \item  If $i=p-1$, then we have
              \begin{equation}
                  \begin{split}
                      \Lambda_{p-1,n}^{p^{n-1}}-1= & \left(\sum_{l=0}^{p-1}\frac{(-1)^{ln}}{[l!]}\zetax^l p^{\frac{l}{p^{n-1}(p-1)}}\right)^{p^{n-1}}-1                                                                              \\
                      =                  & \left(\sum_{l=0}^{p-1}\frac{(-1)^{ln}}{l!}\zetax^l p^{\frac{l}{p^{n-1}(p-1)}}+O\left(p^{1+\frac{2}{p^{n-1}(p-1)}}\right)\right)^{p^{n-1}}-1                                           \\
                      =                  & \left(\sum_{j=0}^{p^{n-1}}\binom{p^{n-1}}{j}\left(\sum_{l=0}^{p-1}\frac{(-1)^{ln}}{l!}\zetax^l p^{\frac{l}{p^{n-1}(p-1)}}\right)^{p^{n-1}-j}\left(O\left(p^{1+\frac{2}{p^{n-1}(p-1)}}\right)\right)^j\right)-1.
                      \label{eq:19663}
                  \end{split}
              \end{equation}
              For $1\leq j\leq p^{n-1}$, we observe that
              \begin{gather*}
                  \begin{aligned}
                      \binom{p^{n-1}}{j}\left(\sum_{l=0}^{p-1}\frac{(-1)^{ln}}{l!}\zetax^l p^{\frac{l}{p^{n-1}(p-1)}}\right)^{p^{n-1}-j}\left(O\left(p^{1+\frac{2}{p^{n-1}(p-1)}}\right)\right)^j=O\left(p^{n-1-v_p(j)+j+\frac{2j}{p^{n-1}(p-1)}}\right) .
                  \end{aligned}
              \end{gather*}
              Since $v_p(j)\leq n-1$ and $j\geq 1$, we know that $$n-1-v_p(j)+j+\frac{2j}{p^{n-1}(p-1)}>2 ,$$
              and thus \Cref{eq:19663} can be written as
              \begin{align*}
                  \Lambda_{p-1,n}^{p^{n-1}}-1= & \left(\sum_{l=0}^{p-1}\frac{(-1)^{ln}}{l!}\zetax^l p^{\frac{l}{p^{n-1}(p-1)}}\right)^{p^{n-1}}-1+\sum_{j=1}^{p^{n-1}}O\left(p^2\right) \\
                  =                            & \left(\sum_{l=0}^{p-1}\frac{(-1)^{ln}}{l!}\zetax^l p^{\frac{l}{p^{n-1}(p-1)}}\right)^{p^{n-1}}-1+O\left(p^2\right) .
              \end{align*}

              By \Cref{superluckydog}, we have
              \begin{gather*}
                  \left(\sum_{l=0}^{p-1}\frac{(-1)^{ln}}{l!}\zetax^l p^{\frac{l}{p^{n-1}(p-1)}}\right)^{p^{n-1}}-1=\sum_{l=1}^{p-1}\frac{(-1)^l}{[l!]}\zetax^lp^{\frac{l}{p-1}}+ \zetax p^{1+\frac{1}{p(p-1)}}+O\left(p^{1+\frac{1}{p-1}}\right).
              \end{gather*}
              As a consequence, we obtain
              $$\Lambda_{p-1,n}^{p^{n-1}}-1=\sum_{l=1}^{p-1}\frac{(-1)^l}{[l!]}\zetax^l p^{\frac{l}{p-1}}+ \zetax p^{1+\frac{1}{p-1}-\frac{1}{p}}+O\left(p^{1+\frac{1}{p-1}}\right). $$

        \item
              Now we suppose the formula holds for all $j$ with $p-1\leq j\leq i-1$, i.e.
              $$\Lambda_{j,n}^{p^{n-1}}-1=\sum_{l=1}^{p-1} \frac{(-1)^l}{[l!]}\zetax^l p^{\frac{l}{p-1}}+\zetax p^{1+\frac{1}{p-1}-\frac{1}{p^{j-p+2}}}+O\left(p^{1+\frac{1}{p-1}}\right).$$

              One has
              \begin{equation}\label{luckydog}
                  \begin{split}
                      \Lambda_{i,n}^{p^{n-1}}-1= & \left(\Lambda_{i-1,n}+(-1)^n\zetax p^{\frac{1}{p^{n-2}(p-1)}-\frac{1}{p^{n-p+i}}}\right)^{p^{n-1}}-1                                                                                                                      \\
                      =                    & \Lambda_{i-1,n}^{p^{n-1}}-1+\left((-1)^n\zetax p^{\frac{1}{p^{n-2}(p-1)}-\frac{1}{p^{n-p+i}}}\right)^{p^{n-1}}\\
                      &+\sum_{k=1}^{p^{n-1}-1}\binom{p^{n-1}}{k}\Lambda_{i-1,n}^{p^{n-1}-k}\left((-1)^n\zetax p^{\frac{1}{p^{n-2}(p-1)}-\frac{1}{p^{n-p+i}}}\right)^k.
                  \end{split}
              \end{equation}
              Notice that for every $k\in\{1,\cdots,p^{n-1}-1\}$,
              \begin{gather*}
                  v_p\left(\binom{p^{n-1}}{k}\Lambda_{i-1,n}^{p^{n-1}-k}\left((-1)^n\zetax p^{\frac{1}{p^{n-2}(p-1)}-\frac{1}{p^{n-p+i}}}\right)^k\right)=(n-1)-v_p(k)+\frac{k}{p^{n-2}}\left(\frac{1}{p-1}-\frac{1}{p^{i-p+2}} \right).
              \end{gather*}
              Thus,  the condition with variable $k$
              $$v_p\left(\binom{p^{n-1}}{k}\Lambda_{i-1,n}^{p^{n-1}-k}\left((-1)^n\zetax p^{\frac{1}{p^{n-2}(p-1)}-\frac{1}{p^{n-p+i}}}\right)^k\right)< 1+\frac{1}{p-1}$$
              implies $k=p^{n-2}$.  Since $\Lambda_{i-1,n}=1+O\left(p^{\frac{1}{p^{n-1}(p-1)}}\right)$, we have
              \begin{equation}\label{eq:1060}
                  \begin{split}
                      & \binom{p^{n-1}}{p^{n-2}}\left((-1)^n\zetax p^{\frac{1}{p^{n-2}(p-1)}-\frac{1}{p^{n-p+i}}}\right)^{p^{n-2}}\Lambda_{i-1,n}^{p^{n-1}-p^{n-2}}                                      \\
                      = & p \left((-1)^{n-2}(-1)^n\zetax p^{\frac{1}{p-1}-\frac{1}{p^{2-p+i}}}\right)\left(1+O\left(p^{\frac{1}{p^{n-1}(p-1)}}\right)\right)^{p^{n-2}(p-1)} +O\left(p^2\right)             \\
                      = & \zetax p^{1+\frac{1}{p-1}-\frac{1}{p^{2-p+i}}}\left(1+\sum_{r=1}^{p-1}\binom{p-1}{r}O\left(p^{\frac{r}{p^{n-1}(p-1)}}\right)\right)^{p^{n-2}}+O\left(p^2\right)      \\
                      = & \zetax p^{1+\frac{1}{p-1}-\frac{1}{p^{2-p+i}}}\left(1+O\left(p^{\frac{1}{p^{n-1}(p-1)}}\right)\right)^{p^{n-2}}+O\left(p^2\right) .
                  \end{split}
              \end{equation}
              Notice that
              \begin{align*}
                  \left(1+O\left(p^{\frac{1}{p^{n-1}(p-1)}}\right)\right)^{p^{n-2}}= & 1+\sum_{r=1}^{p^{n-2}}\binom{p^{n-2}}{r}O\left(p^{\frac{r}{p^{n-1}(p-1)}}\right) \\
                  =                                                                  & 1+\sum_{r=1}^{p^{n-2}}O\left(p^{n-2-v_p(r)+\frac{r}{p^{n-1}(p-1)}}\right)        \\
                  =                                                                  & 1+O\left(p^{\frac{1}{p(p-1)}}\right) .
              \end{align*}

              Since $1+\frac{1}{p-1}-\frac{1}{p^{2-p+i}}+\frac{1}{p(p-1)}>1+\frac{1}{p-1}$ for all $i\geq p$,     we can rewrite \Cref{eq:1060} as
              \begin{align*}
                    & \zetax p^{1+\frac{1}{p-1}-\frac{1}{p^{2-p+i}}}\left(1+O\left(p^{\frac{1}{p(p-1)}}\right)\right)+O\left(p^2\right)
                  \\
                  = & \zetax p^{1+\frac{1}{p-1}-\frac{1}{p^{2-p+i}}} +O\left(p^{1+\frac{1}{p-1}}\right) .
              \end{align*}

              \par Thus, by assembling the terms of valuation $\geq 1+\frac{1}{p-1}$ in \Cref{luckydog}, we obtain
              \begin{align*}
                  \Lambda_{i,n}^{p^{n-1}}-1= & \Lambda_{i-1,n}^{p^{n-1}}-1+\left((-1)^n\zetax p^{\frac{1}{p^{n-2}(p-1)}-\frac{1}{p^{n-p+i}}}\right)^{p^{n-1}} \\
                                             & +\zetax p^{1+\frac{1}{p-1}-\frac{1}{p^{2-p+i}}} +O\left(p^{1+\frac{1}{p-1}}\right)                             \\
                  =                          & \Lambda_{i-1,n}^{p^{n-1}}-1-\zetax p^{1+\frac{1}{p-1}-\frac{1}{p^{1-p+i}}}                                     \\
                                             & +\zetax p^{1+\frac{1}{p-1}-\frac{1}{p^{2-p+i}}} +O\left(p^{1+\frac{1}{p-1}}\right) .
              \end{align*}
              Finally, combining with the induction hypothesis, we obtain
              \begin{align*}
                  \Lambda_{i,n}^{p^{n-1}}-1=
                    & \sum_{l=1}^{p-1} \frac{(-1)^l}{[l!]}\zetax^l p^{\frac{l}{p-1}}+\zetax p^{1+\frac{1}{p-1}-\frac{1}{p^{i-p+1}}}+O\left(p^{1+\frac{1}{p-1}}\right)  \\
                    & -\zetax p^{1+\frac{1}{p-1}-\frac{1}{p^{1-p+i}}}+\zetax p^{1+\frac{1}{p-1}-\frac{1}{p^{2-p+i}}} +O\left(p^{1+\frac{1}{p-1}}\right)                \\
                  = & \sum_{l=1}^{p-1} \frac{(-1)^l}{[l!]}\zetax^l p^{\frac{l}{p-1}}+\zetax p^{1+\frac{1}{p-1}-\frac{1}{p^{2-p+i}}}+O\left(p^{1+\frac{1}{p-1}}\right).
              \end{align*}

    \end{enumerate}
\end{proof}

\begin{proposition}\label{seconde}  Let $n\in \bbN_{\geq 2}$.
    \begin{enumerate}
        \item
              For $1\leq i< p-1$, we have
              $$\Lambda_{i,n}^{p^n}-1=\frac{(-1)^i}{(i+1)!}\zetax^{i+1}p^{1+\frac{i+1}{p-1}}+o\left(p^{1+\frac{i+1}{p-1}}\right) .$$
        \item \label{lem:p_squre_power_expansion2}
              For $i\geq p-1$, we have
              $$\Lambda_{i,n}^{p^{n}}-1=\zetax p^{2+\frac{1}{p-1}-\frac{1}{p^{i-p+2}}}+O\left(p^{2+\frac{1}{p-1}}\right).$$
    \end{enumerate}
\end{proposition}
\begin{proof} \begin{enumerate}
        \item Recall that by \Cref{premier}, for  $1\leq i< p-1$, we have
              $$\Lambda_{i,n}^{p^{n-1}}=\sum_{l=0}^{i}\frac{(-1)^l}{[l!]}\zetax^l p^{\frac{l}{p-1}}+O\left(p^{1+\frac{1}{p(p-1)}}\right).$$
              Let $\tilde{\Lambda}_{i,n}= \sum_{l=0}^{i}\frac{(-1)^l}{l!}\zetax^l p^{\frac{l}{p-1}}=\sum_{l=0}^i\frac{\theta_n^l}{l!}$, with $\theta_n=-\zetax p^{\frac{1}{p-1}}$. By \labelcref{lem:stirling_gen:2} of \Cref{lem:stirling_gen}, for $1\leq k\leq p$, we have
              $$\left(\tilde{\Lambda}_{i,n}-1\right)^k=\left(\sum_{l=1}^i\frac{\theta_n^l}{l!}\right)^k=\sum_{l=k}^{ik}\frac{k!}{l!}\stirling{l}{k}_{\leq i}\theta_n^l$$
              We remark that $v_p(\theta_n)=\frac{1}{p-1}$, $v_p(\tilde{\Lambda}_{i,n})=0$ and
              $$ \tilde{\Lambda}_{i,n}-\Lambda_{i,n}^{p^{n-1}}= \sum_{l=0}^{i}(-1)^l\left(\frac{1}{l!}-\frac{1}{[l!]}\right)\zetax^l p^{\frac{l}{p-1}}+O(p^{1+\frac{1}{p(p-1)}}). $$
              For all $0\leq l\leq i< p-1$, we have $v_p(l!-[l!])\geq 1$; thus we have
              $$ \tilde{\Lambda}_{i,n}-\Lambda_{i,n}^{p^{n-1}}=\sum_{l=1}^{i}O\left( p^{1+\frac{l}{p-1}}\right)+O\left(p^{1+\frac{1}{p(p-1)}}\right),$$
              and we can rewrite $\Lambda_{i,n}^{p^n}-1$ as following:
              $$\Lambda_{i,n}^{p^n}-1=\left(\Lambda_{i,n}^{p^{n-1}}\right)^p-1= \left(\tilde{\Lambda}_{i,n}+O\left(p^{1+\frac{1}{p(p-1)}}\right) \right)^p-1=\tilde{\Lambda}_{i,n}^p-1+O\left(p^{2+\frac{1}{p(p-1)}}\right).$$
              We reduce to estimate the $p$-adic valuation of $\tilde{\Lambda}_{i,n}^p-1$. On the other hand, we have
              \begin{equation}\label{term}\tilde{\Lambda}_{i,n}^p-1=\sum_{k=1}^{p}\binom{p}{k}(\tilde{\Lambda}_{i,n}-1)^k=\sum_{k=1}^{p}\binom{p}{k}\sum_{l=k}^{ik}\frac{k!}{l!}\stirling{l}{k}_{\leq i}\theta^l_n.\end{equation}

              By \Cref{vrai},  we have $v_p\left(\frac{k!}{l!}\stirling{l}{k}_{\leq i}\right)\geq 0$ for any $k,l\in \bbN$. Thus, we can rewrite \Cref{term} by assembling the terms with valuation $> 1+\frac{i+1}{p-1}$:
              \begin{equation}
                  \begin{split}
                      \tilde{\Lambda}_{i,n}^{p}-1
                      =&o(p^{1+\frac{i+1}{p-1}})+\sum_{l=1}^{i+1}\frac{\theta_n^l}{l!}\sum_{k=1}^{l}\binom{p}{k}k!\stirling{l}{k}_{\leq i}+ \sum_{l=p}^{p+i}\frac{p!}{l!} \stirling{l}{k}_{\leq i}\theta_n^{l}
                      \\
                      =& o\left(p^{1+\frac{i+1}{p-1}}\right)+p\sum_{l=1}^{i+1}\frac{\theta_n^l}{l!}\sum_{k=1}^l(-1)^{k-1}(k-1)!\stirling{l}{k}_{\leq i}-p\sum_{l=1}^{i+1}\frac{p!}{(l+p-1)!}\stirling{l+p-1}{p}_{\leq i}\theta_n^{l} \\  =   & o\left(p^{1+\frac{i+1}{p-1}}\right)+p\sum_{l=1}^{i+1}\frac{\theta^l_n}{l!}\left(\sum_{k=1}^l(-1)^{k-1}(k-1)!\stirling{l}{k}_{\leq i}+\frac{l!p}{(l+p-1)!}\stirling{l+p-1}{p}_{\leq i}\right),
                  \end{split}
              \end{equation}
              where the last equality follows from $- (p-1)!\equiv 1\mod p$. Let $$G_i(l)=\left(\sum_{k=1}^l(-1)^{k-1}(k-1)!\stirling{l}{k}_{\leq i}\right)+\frac{p \cdot l!}{(l+p-1)!}\stirling{l+p-1}{p}_{\leq i}.$$
              Together with \Cref{preliminaire2}, we have
              \begin{align*}
                  \widetilde{\Lambda}_{i,n}^p-1= & o\left(p^{1+\frac{i+1}{p-1}}\right)+p\sum_{l=1}^{i+1}G_i(l)\frac{\theta_n^l}{l!}                                                                                                                                            \\
                  =                              & o\left(p^{1+\frac{i+1}{p-1}}\right)+p\left(\sum_{l=1}^i O(p)\frac{\theta_n^l}{l!}+(-1+O(p))\frac{\theta_n^{i+1}}{(i+1)!}\right)                                                                                             \\
                  =                              & o\left(p^{1+\frac{i+1}{p-1}}\right)+p\left(o(p)-\frac{\theta_n^{i+1}}{(i+1)!}+O\left(p^{1+\frac{i+1}{p-1}}\right)\right)                                                                                                    \\
                  =                              & o\left(p^{1+\frac{i+1}{p-1}}\right)-p\frac{\theta_n^{i+1}}{(i+1)!}                                                            =\frac{(-1)^i}{(i+1)!}\zetax^{i+1}p^{1+\frac{i+1}{p-1}}+o\left(p^{1+\frac{i+1}{p-1}}\right) .
              \end{align*}
              As a consequence, we have
              $$\Lambda_{i,n}^{p^{n}}-1=\frac{(-1)^i}{(i+1)!}\zetax^{i+1}p^{1+\frac{i+1}{p-1}}+o\left(p^{1+\frac{i+1}{p-1}}\right)  .$$
        \item
              Now suppose $i\geq p-1$.
              \par Let $\tilde{\Lambda}_{p-1,n}= \sum_{l=0}^{p-1} \frac{(-1)^l}{l!}\zetax^l p^{\frac{l}{p-1}}= \sum_{l=0}^{p-1}\frac{\theta_n^l}{l!}$, with $\theta_n=-\zetax p^{\frac{1}{p-1}}$.
              By \Cref{premier}, we have
              \begin{align*}
                    & \Lambda_{i,n}^{p^{n-1}}-\tilde{\Lambda}_{p-1,n}                                           \\
                  = & \sum_{l=0}^{p-1} (-1)^l\zetax^l\left(\frac{1}{[l!]}-\frac{1}{l!}\right) p^{\frac{l}{p-1}} \\
                    & +\zetax p^{1+\frac{1}{p-1}-\frac{1}{p^{i-p+2}}}+O\left(p^{1+\frac{1}{p-1}}\right)         \\
                  = & \zetax p^{1+\frac{1}{p-1}-\frac{1}{p^{i-p+2}}}+O\left(p^{1+\frac{1}{p-1}}\right) .
              \end{align*}

              Therefore, we have
              \begin{equation}\label{termp}
                  \begin{split}
                      \Lambda_{i,n}^{p^{n}}-1  = & \left(\Lambda_{i,n}^{p^{n-1}}\right)^p-1
                      =                          \left(\tilde{\Lambda}_{p-1,n}+\zetax p^{1+\frac{1}{p-1}-\frac{1}{p^{i-p+2}}}+O\left(p^{1+\frac{1}{p-1}}\right)\right)^p-1                                                          \\
                      =                          & \tilde{\Lambda}_{p-1,n}^p-1+\sum_{k=1}^p\binom{p}{k}\tilde{\Lambda}_{p-1,n}^{p-k}\left(\zetax p^{1+\frac{1}{p-1}-\frac{1}{p^{i-p+2}}}+O\left(p^{1+\frac{1}{p-1}}\right)\right)^k \\
                      =                          & \tilde{\Lambda}_{p-1}^p-1+\tilde{\Lambda}_{p-1,n}^{p-1}\left(\zetax p^{2+\frac{1}{p-1}-\frac{1}{p^{i-p+2}}}+O\left(p^{2+\frac{1}{p-1}}\right)\right)                             \\
                      & +\sum_{k=2}^p\binom{p}{k}\tilde{\Lambda}_{p-1,n}^{p-k}\left(\zetax p^{1+\frac{1}{p-1}-\frac{1}{p^{i-p+2}}}+O\left(p^{1+\frac{1}{p-1}}\right)\right)^k                            \\
                      =                          & \tilde{\Lambda}_{p-1,n}^p-1+\tilde{\Lambda}_{p-1,n}^{p-1}\zetax p^{2+\frac{1}{p-1}-\frac{1}{p^{i-p+2}}}+O\left(p^{2+\frac{1}{p-1}}\right).\end{split}
              \end{equation}

              Since $\tilde{\Lambda}_{p-1,n}^{p-1}=1+O\left(p^{\frac{1}{p-1}}\right)$, we may simplify \Cref{termp} as
              $$\Lambda_{i,n}^{p^{n}}-1  = \tilde{\Lambda}_{p-1,n}^p-1+\zetax p^{2+\frac{1}{p-1}-\frac{1}{p^{i-p+2}}}+O\left(p^{2+\frac{1}{p-1}}\right).$$
              By \Cref{boringdog} we have $\tilde{\Lambda}_{p-1,n}^p-1=\left(\sum_{l=0}^{p-1}\frac{\theta_n^l}{l!}\right)^p-1=O\left(p^{2+\frac{1}{p-1}}\right)$, therefore
              $$\Lambda_{i,n}^{p^{n}}-1=\zetax p^{2+\frac{1}{p-1}-\frac{1}{p^{i-p+2}}}+O\left(p^{2+\frac{1}{p-1}}\right),$$
              as expected.
    \end{enumerate}
\end{proof}

\subsection{Uniforminzer of $K_{2,n}$}\label{subsec:42109}
In this section, we use the expansion of $\zeta_{p^2}$ to get a uniformizer of $K_{2,n}$:
\begin{theorem}\label{thm:uniformizer}
    \begin{enumerate}
        \item The element
              $$\pi_{2,1}= \left(p^{\frac{1}{p}}\right)^{-1}\left(\zeta_{p^2}-\sum_{k=0}^{p-1}\frac{1}{[k!]}\zetax^kp^{\frac{k}{p(p-1)}}\right)$$
              is a uniformizer of $K_{2,1}$.
        \item For $m\geq 2$, the element
              $$\pi_{2,m}= \left(p^{\frac{1}{p^m}}\right)^{-\frac{p^m-1}{p-1}}\left(\zeta_{p^2}-\sum_{k=0}^{p-1}\frac{1}{[k!]}\zetax^kp^{\frac{k}{p(p-1)}}-\sum_{l=2}^m \zetax p^{\frac{1}{p-1}-\frac{1}{p^l}}\right)$$
              is a uniformizer of $K_{2,m}$.
    \end{enumerate}
\end{theorem}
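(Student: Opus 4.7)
My plan is to verify two facts about $\pi_{2,m}$: that it lies in $K_{2,m}$, and that $v_p(\pi_{2,m})=\frac{1}{p^{m+1}(p-1)}$. Since $K_{2,m}/\bbQ_p$ is totally ramified of degree $p^{m+1}(p-1)$---being the compositum of $\bbQ_p(\zeta_{p^2})$ and $\bbQ_p(p^{1/p^m})$, both totally ramified over $\bbQ_p$ with residue field $\bbF_p$---any element of $K_{2,m}$ of this valuation is automatically a uniformizer. The valuation is immediate from \Cref{maintheorem1} with $n=2$: the bracketed expression in the definition of $\pi_{2,m}$ equals $\sum_{l=m+1}^{\infty}\zetax p^{1/(p-1)-1/p^l}+O(p^{1/(p-1)})$, whose $\bbL_p$-valuation is $\tfrac{1}{p-1}-\tfrac{1}{p^{m+1}}$ (contributed by the $l=m+1$ term); multiplying by the given normalization factor and simplifying yields $\tfrac{1}{p^{m+1}(p-1)}$ by a short arithmetic check valid in both the $m=1$ and $m\ge 2$ cases.

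The substantive task is to show that the subtracted expression lies in $K_{2,m}$; together with $\zeta_{p^2}\in K_{2,m}$ and the scalar factor being in $K_{2,m}$, this gives $\pi_{2,m}\in K_{2,m}$. Introducing the $\bbL_p$-elements $\theta:=\zetax p^{1/(p-1)}$ and $\eta:=\zetax p^{1/(p(p-1))}$, I reduce to the two claims $\theta\in\bbQ_p(\zeta_p)$ and $\eta\in K_{2,m}$: then each $\eta^k/[k!]$ lies in $K_{2,m}$ (since $[k!]\in\bbZ_p^{\times}$ for $0\le k\le p-1$), and each $\theta\cdot p^{-1/p^l}$ lies in $K_{2,m}$ for $l\leq m$ (since $p^{1/p^l}\in K_{2,m}$). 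A standard computation from expanding $\Phi_p(1-Y)$ gives $(1-\zeta_p)^{p-1}=-p\cdot u$ for some one-unit $u\in\bbQ_p(\zeta_p)$; as $p-1\in\bbZ_p^{\times}$, the one-unit $u$ admits a canonical $(p-1)$-th root in $\bbQ_p(\zeta_p)$, and $(1-\zeta_p)/u^{1/(p-1)}\in\bbQ_p(\zeta_p)$ is a $(p-1)$-th root of $-p$ whose leading $\bbL_p$-term is $\zetax p^{1/(p-1)}$. The parallel argument with $\mu:=\zeta_{p^2}-1\in K_{2,m}$ shows $\mu^{p-1}/(-p^{1/p})$ is a one-unit in $K_{2,m}$ (this is where we need $m\ge 1$, to ensure $p^{1/p}\in K_{2,m}$), and dividing $\mu$ by its canonical $(p-1)$-th root produces an element of $K_{2,m}$ that is a $(p-1)$-th root of $-p^{1/p}$ with leading term $\zetax p^{1/(p(p-1))}$.

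The remaining, and main, step is to identify these constructed elements with $\theta$ and $\eta$ as elements of $\bbL_p$. I will argue that a $(p-1)$-th root of a single-term element of $\bbL_p$ is itself single-term: if $y=\sum_{x\in S}[c_x]p^x$ has $|S|>1$ and $x_0<x_1$ are the two smallest exponents in $S$, then the coefficient of $p^{(p-2)x_0+x_1}$ in $y^{p-1}$ equals $(p-1)[c_{x_0}]^{p-2}[c_{x_1}]$ (no other monomial in the multinomial expansion contributes at this valuation, since $x_i-x_0\ge x_1-x_0$ for every $i\ge 1$), whose reduction $-c_{x_0}^{p-2}c_{x_1}\in\bar{\bbF}_p$ is nonzero. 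Consequently the $p-1$ roots of $X^{p-1}+p^{1/p}$ in $\bbL_p$ are exactly $\{\zeta_{p-1}^k\cdot\zetax p^{1/(p(p-1))}:0\le k\le p-2\}$, pairwise distinguished by their leading coefficients; our element in $K_{2,m}$ has leading coefficient $\zetax$ and therefore coincides with $\eta$, and the identical argument produces $\theta$. This single-term rigidity is precisely what handles the final identification, since \emph{a priori} the constructed $(p-1)$-th root in $K_{2,m}$ could carry extra lower-order terms in its $\bbL_p$-expansion.
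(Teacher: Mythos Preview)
Your proof is correct and follows the same two-step strategy as the paper: the valuation comes directly from \Cref{maintheorem1}, and membership in $K_{2,m}$ reduces to showing that $\theta=\zetax p^{1/(p-1)}$ lies in $\bbQ_p(\zeta_p)$. Your Hensel-type construction of $\theta$ is exactly the content of the paper's \Cref{lem:tame_iso}. The paper is slightly more economical in two places. First, rather than arguing separately that $\eta=\zetax p^{1/(p(p-1))}$ lies in $K_{2,m}$, the paper simply factors
\[
\frac{1}{[k!]}\zetax^k p^{\frac{k}{p(p-1)}}=\frac{1}{[k!]}\,p^{-k/p}\cdot\bigl(\zetax p^{\frac{1}{p-1}}\bigr)^k,
\]
so only $\theta$ and $p^{1/p}\in K_{2,m}$ are needed, and the parallel argument for $\eta$ becomes unnecessary. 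Second, your single-term rigidity step, while correct, is heavier than required: the $p-1$ roots of $X^{p-1}+p$ (respectively $X^{p-1}+p^{1/p}$) in $\bbL_p$ differ pairwise by $(p-1)$-th roots of unity, and these already lie in $\bbZ_p^{\times}$; hence once any one root is shown to lie in the field in question, all of them do, and no matching of leading coefficients is needed.
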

\begin{proof}
    By \Cref{maintheorem1}, we know that
    $$\zeta_{p^2}=\sum_{k=0}^{p-1}\frac{1}{[k!]}\zetax^kp^{\frac{k}{p(p-1)}}+\sum_{k=2}^\infty\zetax p^{\frac{1}{p-1}-\frac{1}{p^k}}+O\left(p^{\frac{1}{p-1}}\right) .$$
    Therefore
    $$v_p(\pi_{2,1})= v_p\left(\sum_{k=2}^\infty\zetax p^{\frac{1}{p-1}-\frac{1}{p^k}}+O\left(p^{\frac{1}{p-1}}\right)\right)-\frac{1}{p}=\frac{1}{p-1}-\frac{1}{p^2}-\frac{1}{p}=\frac{1}{p^2(p-1)}=e_{K_{2,1}/\bbQ_p}^{-1}$$
    and similarly $v_p(\pi_{2,m})=\frac{1}{p^{m+1}(p-1)}=e_{K_{2,m}/\bbQ_p}^{-1}$ for $m\geq 2$.
    \par To see that $\pi_{2,1}\in K_{2,1}$, we can write $\frac{1}{[k!]}\zetax^kp^{\frac{k}{p(p-1)}}$ as $\left(\frac{1}{[k!]}p^{-\frac{k}{p}}\right)\left(\zetax p^{\frac{1}{p-1}}\right)^k$ and consequently $\pi_{2,1}\in\bbQ_p\left(\zeta_{p^2},\zetax p^{\frac{1}{p-1}}\right)$. By \Cref{lem:tame_iso}, this field is exactly $K_{2,1}$. Similarly, we have $\pi_{2,m}\in K_{2,m}$ for all $m\geq 2$, which finishes the proof.
\end{proof}

When doing the calculation, one should always make sure that the choice of $\zeta_{p^2}$ and $\zetax$ are compatible, i.e. $\zeta_{p^2}=1+\zetax p^{\frac{1}{p(p-1)}}+o\left(p^{\frac{1}{p(p-1)}}\right)$. To get over this inconvenience, one can replace $\zetax p^{\frac{1}{p-1}}$ with $1-\zeta_p$ and use the fact that $\zeta_p=1-\zetax p^{\frac{1}{p-1}}+O\left(p^{\frac{2}{p-1}}\right)$ to eliminate the appearance of $\zetax p^{\frac{1}{p(p-1)}}$ and $\zetax p^{\frac{1}{p-1}}$ in $\pi_{2,m}$:
\begin{corollary}\label{coro:32280}
    \begin{enumerate}
        \item The element
              $$\tilde{\pi}_{2,1}= \left(p^{\frac{1}{p}}\right)^{-1}\left(\zeta_{p^2}-\sum_{k=0}^{p-2}\frac{1}{[k!]}\left(\frac{1-\zeta_p}{p^{\frac{1}{p}}}\right)^k-p^{\frac{1}{p}}\right)$$
              is a uniformizer of $K_{2,1}$.
        \item For $m\geq 2$, the element
              $$\tilde{\pi}_{2,m}= \left(p^{\frac{1}{p^m}}\right)^{-\frac{p^m-1}{p-1}}\left(\zeta_{p^2}-\sum_{k=0}^{p-2}\frac{1}{[k!]}\left(\frac{1-\zeta_p}{p^{\frac{1}{p}}}\right)^k-p^{\frac{1}{p}}-(1-\zeta_p)\sum_{l=2}^m p^{-\frac{1}{p^l}}\right)$$
              is a uniformizer of $K_{2,m}$.
    \end{enumerate}
\end{corollary}
Another method proposed by Lampert without proof (cf. \cite{LampertMOF1}) to construct a uniformizer of $K_{2,2}$ (which can be generalized to arbitrary $K_{2,m}$ easily) is to consider the following sequence\footnote{We modifiy Lampert's original idea slightly to correct and simplify the result.}:
\begin{equation*}
    \left\{
    \begin{aligned}
        z_1:=     & \zeta_{p^2}-1-p^{\frac{1}{p}},                                                                                \\
        z_2:=     & z_1^{p-1}+p^{\frac{1}{p}}-p^{\frac{2p-1}{p^2}} ,                                                              \\
        z_{n+1}:= & z_n^{p-1}-\left(\left[C_{v_p(z_n)}\left(z_n\right)\right]p^{v_p(z_n)}\right)^{p-1}\text{, for }n=2,3,\cdots .
    \end{aligned}\right.
\end{equation*}
Then we can prove by keeping track of $\Supp{z_n}$ that:
\begin{proposition}\label{prop:9814}
    \begin{enumerate}
        \item There exists an integer $N\leq p$ such that $p^3(p-1)v_p(z_N)$ is an integer satisfying
              $$p^3(p-1)v_p(z_N)\equiv -p+1\pmod{p^2}.$$
        \item Let $M=p(p-1)(p-2)\sum_{i=1}^{N-1}v_p(z_n)+p$. Then $M\in\bbZ$ and for any solution $(a,b,c)\in\bbZ^3$ of the linear equation
              $$\left(p^2 M+1-p\right)a+p(p-1)b+p^2c=1,$$
              the element $z_N^a\cdot p^{b/p^2}\cdot \left(\zeta_{p^2}-1\right)^c$ is a uniformizer of $K_{2,2}$. In particular, one may take $(a,b,c)=(p+1,-p M,-2M+1)$ and
              $$\pi_{2,2}^\prime=\frac{z_N^{p+1}}{p^{M/p}\left(\zeta_{p^2}-1\right)^{2M-1}}$$
              is a uniformizer of $K_{2,2}$.
    \end{enumerate}

\end{proposition}

\begin{example}
    When $p=7$, our method (cf. \Cref{thm:uniformizer} and \Cref{coro:32280}) gives two uniformizers of $K_{2,2}$:
    $$\pi_{2,2}=7^{-\frac{8}{49}}\left(\zeta_{49}-1-\zeta_{12}7^{\frac{1}{42}}-\frac{1}{[2]}\zeta_6 7^{\frac{1}{21}}+\zeta_4 7^{\frac{1}{14}}-\frac{1}{[3]}\zeta_3 7^{\frac{2}{21}}-\zeta_{12}^5 7^{\frac{5}{42}}-7^{\frac{1}{7}}-\zeta_{12}7^{\frac{43}{294}}\right),$$
    \begin{gather*}
        \tilde{\pi}_{2,2}=7^{-\frac{8}{49}}\left(\zeta_{49}-1-\left(\frac{1-\zeta_7}{7^{\frac{1}{7}}}\right)-\frac{1}{[2]}\left(\frac{1-\zeta_7}{7^{\frac{1}{7}}}\right)^2+\left(\frac{1-\zeta_7}{7^{\frac{1}{7}}}\right)^3-\frac{1}{[3]}\left(\frac{1-\zeta_7}{7^{\frac{1}{7}}}\right)^4-\left(\frac{1-\zeta_7}{7^{\frac{1}{7}}}\right)^5-7^{\frac{1}{7}}-(1-\zeta_7)7^{\frac{1}{49}}\right),
    \end{gather*}
    while Lampert's method (cf. \Cref{prop:9814}) provides a more complicated uniformizer of the same field:
    \begin{gather*}
        \pi_{2,2}^\prime=\frac{\left(\left(\left(\left(\left(\left(\left(\zeta_{49}-1-7^{1/7}\right)^6+7^{1/7}-7^{13/49}\right)^6+7\right)^6+7^{43/7}\right)^6+7^{37}\right)^6+7^{1555/7}\right)^6+7^{1333}\right)^8}{7^{55987/7}\left(\zeta_{49}-1\right)^{111973}}.
    \end{gather*}

\end{example}

\bibliographystyle{alphaurl}
\bibliography{ref}
\end{document}